\newcommand{\RR}{{\mathbb R}}
\newcommand{\CC}{{\mathbb C}}
\newcommand{\ou}{{\overline{u}}}
\newcommand{\pa}{{\partial}}
\newcommand{\rmi}{{\mathrm{i}}}
\newcommand{\bH}{{\mathbb{H}}}
\newcommand{\bY}{{\mathbb{Y}}}
\newcommand{\rmd}{{\mathrm{d}}}
\newcommand{\rms}{{\mathrm{s}}}
\newcommand{\rmu}{{\mathrm{u}}}
\newcommand{\cB}{{\mathcal B}}
\newcommand{\cF}{{\mathcal F}}
\newcommand{\cL}{{\mathcal L}}
\newcommand{\cV}{{\mathcal V}}
\newcommand{\ow}{{\overline{w}}}
\DeclareMathOperator{\supp}{supp}
\DeclareMathOperator{\dom}{dom}
\DeclareMathOperator*{\esssup}{ess\,sup}
\renewcommand{\Re}{\text{\rm Re}}
\newcommand{\eps}{\varepsilon}
\newcommand{\beq}{\begin{equation}}
\newcommand{\enq}{\end{equation}}
\let\geq\geqslant
\let\leq\leqslant
\numberwithin{equation}{section}
\allowdisplaybreaks \numberwithin{equation}{section}
\newtheorem{theorem}{Theorem}[section]
\newtheorem{lemma}[theorem]{Lemma}
\newtheorem{definition}[theorem]{Definition}
\theoremstyle{remark}
\newtheorem{remark}[theorem]{Remark}
\newtheorem{example}[theorem]{Example}
\begin{document}

\numberwithin{equation}{section}
\allowdisplaybreaks

\title[Real Spectrum of Second Order Operators on cylinders]{ Spectrum of non-planar traveling waves}

  \author[A.\ Ghazaryan, Y.\ Latushkin, A.\ Pogan]{ Anna Ghazaryan, Yuri Latushkin and Alin Pogan \hspace{80pt}}

%\author{Anna Ghazaryan}
\address{
Department of Mathematics \\
Miami University \\
Oxford, OH  45056,  USA
}
\email{ghazarar@miamioh.edu}
\urladdr{http://www.users.miamioh.edu/ghazarar/}

\address{Department of Mathematics,
University of Missouri, Columbia, MO 65211, USA}
\email{latushkiny@missouri.edu}
\urladdr{http://www.math.missouri.edu/personnel/faculty/latushkiny.html}

\address{Department of Mathematics \\
Miami University \\
Oxford, OH  45056 USA} \email{pogana@miamioh.edu}
\urladdr{http://www.users.miamioh.edu/pogana/}

\thanks{Partially supported by the US National Science
Foundation under Grants NSF DMS-1710989 and NSF DMS-1311313, by the Research Board and Research Council of the University of Missouri, and by the Simons Foundation.}

\date{\today}
\subjclass[2010]{Primary 47F05; Secondary 35C05, 35P05.}
\keywords{non-planar fronts, traveling waves, essential spectrum, isolated eigenvalues, bi-semigroups of operators, reaction-diffusion.}

\hspace*{-3mm}
%%%%%%%%%%%%%%%%%%%%%%%%%%%%%%%%%%%%%%%%%%

\begin{abstract}
In this paper we prove that a class of non \textit{self-adjoint} second order differential operators acting in cylinders $\Omega\times\RR\subseteq\RR^{d+1}$ have only real discrete spectrum located to the right of the right most point of the essential spectrum. We describe the essential spectrum using the limiting properties of the potential. To track the discrete spectrum we use spatial dynamics and bi-semigroups of linear operators to estimate the decay rate of eigenfunctions associated to isolated eigenvalues.
\end{abstract}

\maketitle

\section{Introduction}\label{s1}

Reaction-diffusion equations are used to model a variety of natural phenomena that occur as a result of interaction of spatial diffusion, convection and reaction of participating variables. In this paper, we consider the reaction-diffusion equation
\begin{equation}\label{RD}
u_t=\Delta_{x,y} u+f(u),\quad (x,y)\in\Omega\times\RR,
\end{equation}
where $f:\RR\to\RR$ is a function of class $C^2$ and the set $\Omega\subseteq\RR^d$ is either a bounded or unbounded domain in $\RR^d$.

In many cases, reaction-diffusion equations exhibit traveling waves, which are special solutions that preserve their shape while moving in a preferred direction.
In systems posed on multi-dimensional domains, such as $\Omega\times\RR$,  a traveling wave is called planar if it is a function  $\widetilde u(t,x,y)=\widetilde u(z)$  of the variable $z= k\cdot (x,y) -ct$, where $k=(k_1,k_2)\in\RR^d\times\RR$ is a constant vector, and  if it is asymptotic to distinct spatially constant steady-state solutions. Without loss of generality, one can take $k=(0,\dots,0,1)$, and, therefore,  $\widetilde u(t,x,y)=\widetilde u(y-ct)$. On the other hand, a solution  of the form $u(t,x,y)=\ou(x,y-ct)$ is called a \textit{non-planar traveling wave}.
The existence of such traveling waves has been established in various cases, using methods such as center-manifold theory, topological methods, maximum principle based arguments, or by exploiting the variational structure of the equation.
Detailed descriptions and specific examples may be found in  \cite{BLL,BN,Berestycki2003,FSV,HaR,LPSS,MoNi,PSS,Rab,Xin} and references therein.

We now briefly describe a non trivial example of $x$-periodic, non-planar traveling waves. This existence result was established in \cite{HaR}.

  %Next, we look for examples of \textit{non-planar} traveling waves solutions of \eqref{RD}.
  %The existence of the following non trivial example of $x$-periodic, non-planar traveling waves was proved in \cite{HaR}.
\begin{example}\label{example-Hamel}
Assume that the function $f$ is of class $C^2$ on an open interval containing $[0,1]$, and there exists $a\in(0,1)$ such that
\begin{equation}\label{hyp-f}
f(0)=f(a)=f(1)=0,\; f(u)\ne 0\;\mbox{for}\;u\in (0,a)\cup(a,1).
\end{equation}
In addition, we assume that $u=0$ and $u=1$ are stable equilibria and $u=a$ is an unstable equilibrium of the diffusion free equation \eqref{RD}, that is,
\begin{equation}\label{bistable}
f'(0)<0,\quad f'(1)<0,\quad f'(a)>0.
\end{equation}
If conditions \eqref{hyp-f} and \eqref{bistable} are satisfied, equation \eqref{RD} is called bistable.
A one-dimensional standing wave of \eqref{RD}, is a solutions of the form $u(t,x,y)=\overline{w}(x)$. The profile $\overline{w}$ satisfies the equation
\begin{equation} \overline{w}_{xx}+f(\overline{w})=0. \label{sch}\end{equation}
Under assumptions \eqref{hyp-f} and \eqref{bistable} this second order equation has homoclinic (spikes), heteroclinic (layers) and periodic solutions.

It is well-known that for any $L>L_{\mathrm{min}}=2\pi\sqrt{f'(a)}$ there exists an unique (up to translations) $L$-periodic solution of equation \eqref{sch} denoted $\ow_L$. In \cite[Theorem 1.1]{HaR} it was proved that \eqref{RD} admits non-planar, $L$-periodic in $x$ solutions connecting the standing wave $\ow_L$ to the equilibrium $u=1$ as $z\to\pm\infty$. More precisely, for each $L>L_{\mathrm{min}}$, there exists a minimal speed $c_L\in\RR$ such that for any $c<c_L$ there exist solutions $\ou_{c,L}(x,z)$ satisfying
\begin{enumerate}
\item[(i)] $\Delta_{x,z}\ou_{c,L}+c\pa_z\ou_{c,L}+f(\ou_{c,L})=0$ in $\RR^2$;
\item[(ii)] $\ou_{c,L}(x,z)\to 1$ as $z\to+\infty$ uniformly in $x\in\RR$;
\item[(iii)] $\ou_{c,L}(x,z)\to \ow_L(x)$ as $z\to-\infty$ uniformly in $x\in\RR$;
\item[(iv)] $\ow_L(x)<\ou_{c,L}(x,z)<1$ for any $x,z\in\RR$;
\item[(v)] $\ou_{c,L}(x+L,z)=\ou_{c,L}(x,z)$ for any $x,z\in\RR$.
\end{enumerate}
\end{example}

Traveling waves (planar and non-planar) are abundant in nature and human activities. In particular, equation \eqref{RD} is a very natural, simple model that describes phenomena arising in chemistry and biology. In this case traveling waves solutions are ubiquitous.  For traveling waves as physical phenomena an important concept is the stability of the waves which describes their resilience under perturbations.
The stability analysis  is based on the information about the location of the spectrum of the operator obtained by linearizing the right hand side of the reaction-diffusion equation  \eqref{RD} about the wave. The spectrum of the linearization consist of  discrete eigenvalues of finite multiplicity and essential spectrum. This   paper  addresses certain properties of the spectrum of this linear operator.

Throughout this paper we assume the existence of a non-planar traveling wave $u(t,x,y)=\ou(x,y-ct)$ of equation \eqref{RD} traveling at speed $c\ne 0$.  In the  variable $z=y-ct$, equation \eqref{RD} becomes
\begin{equation}\label{RD-moving-frame}
u_t=\Delta_{x,z} u+c\pa_z u+f(u).
\end{equation}
We note that $u(t,x,z)=\ou(x,z)$ is a time independent solution of \eqref{RD-moving-frame}. The linearization of \eqref{RD-moving-frame} about $\ou$ is
\begin{equation}\label{RD-linear}
u_t=\cL u, \quad\mbox{where}\quad\cL=\Delta_{x,z}+c\pa_z+V,
\end{equation}
Here $V:\Omega\times\RR\to\RR$ is given by $V(x,z)=f'(\ou(x,z))$. We consider $\cL$ as a closed linear operator on $L^2(\Omega\times\RR)$ with the usual domain $H^2(\Omega\times\RR)\cap H^1_0(\Omega\times\RR)$ when $\Omega\ne\RR^d$ and $H^2(\RR^{d+1})$ when $\Omega=\RR^d$. For simplicity, we write $H^2(\Omega\times\RR)\cap H^1_0(\Omega\times\RR)$ in both cases, slightly abusing the notation.

Our main purpose is to describe the spectrum of the linear operator $\cL$ defined in \eqref{RD-linear}.
The operator $\Delta_{x,z}+V$ is self-adjoint and therefore its spectrum is real. The operator $\cL$ is not self-adjoint and its spectrum is not real. However, we prove that the discrete spectrum of $\cL$ located to the right of the right most point of the essential spectrum is real. This result is  known for the one-dimensional case when there is no $x$-variable, see \cite{BoJo,HLS2}. Its importance stems from the fact that in the typical situation when the essential spectrum is marginally stable, i.e.,  touches the imaginary axis only at 0, the eigenvalues to the right of the right most point of the essential spectrum, if any, give absolute instability of the wave $\ou$, see, e.g., \cite{Sanst}.

We consider \eqref{RD-linear} under the following assumptions:

\noindent{\bf Hypotheses.} \textit{The potential $V:\Omega\times\RR\to\RR$ is bounded. Moreover, there exist two bounded functions $V_\pm:\Omega\to\RR$, such that
\begin{enumerate}
\item[(H1)] $\lim_{z\to\pm\infty}\|V(\cdot,z)-V_\pm(\cdot)\|_{L^\infty(\Omega)}=0$;
\item[(H2)] $z\to \|V(\cdot,z)-V_\pm(\cdot)\|_{L^\infty(\Omega)}$ belongs to $L^1(\RR_\pm)\cap L^\infty(\RR_\pm)$;
\item[(H3)] In the case when the domain $\Omega$ is unbounded we assume that
$$\lim_{r\to\infty}\sup_{x\in\Omega,|x|\geq r}\|V(x,\cdot)-V_\pm(x)\|_{L^\infty(\RR_\pm)}=0.$$
\end{enumerate}}
Hypotheses (H1)-(H3) are satisfied in case of the $x$-periodic, non-planar traveling waves $\ou_{c,L}$ introduced in Example~\ref{example-Hamel}. Indeed,
since $f$ is a smooth function of class $C^2$, using assertions (ii)-(iv), we immediately infer that the potential $V_{c,L}(x,z)=f'(\ou_{c,L}(x,z))$ satisfies Hypothesis (H1) with $\Omega=(0,L)$, $V_{c,L,+}\equiv f'(1)$ and $V_{c,L,-}(x)=f'(\ow_L(x))$. Moreover, using the results from \cite[Remark 1.5, Proposition 4.1]{HaR} (alternatively one can use \cite{BN,LPSS}) we have that the convergence of the non-planar solution $\ou_{c,L}(x,z)$ at $z=\pm\infty$ is exponential in $z$ and uniformly in $x\in (0,L)$, that is there exist $M,\alpha>0$ such that
\begin{equation*}
\|V_{c,L}(\cdot,z)-V_{c,L,\pm}(\cdot)\|_{L^\infty(0,L)}\leq Me^{-\alpha|z|}\quad\mbox{for any}\quad z\in\RR,
\end{equation*}
which proves that the potential $V_{c,L}$ satisfies Hypothesis (H2).

To prove our result we need to use the elementary facts regarding the spectrum of the Schr\"odinger operator $\partial_x^2+V_\pm$. In the case when $\Omega\subseteq\RR^d$ is a bounded domain the spectrum of $\partial_x^2+V_\pm$ consists of a sequence of eigenvalues decreasing to $-\infty$. In the case when $\Omega\subseteq\RR^d$ is unbounded there exist $\alpha_\pm\in\RR$ such that
$\sigma_{\mathrm{ess}}(\partial_x^2+V_\pm)=(-\infty,\alpha_\pm]$ and $\sigma_{\mathrm{d}}(\partial_x^2+V_\pm)$ is a bounded set located to the right of $\alpha_\pm$.

Throughout this paper we denote by $\sigma(T)$ the spectrum, $\rho(T)$ the resolvent set,
$\sigma_\rmd(T)$ the set of isolated eigenvalues of finite algebraic multiplicity of a closed, densely defined linear operator $T:\dom(T)\subseteq\bH\to\bH$ on a Hilbert space $\bH$.
The essential spectrum is defined by $\sigma_{\mathrm{ess}}(T)=\sigma(T)\setminus\sigma_\rmd(T)$ and the essential resolvent set is given by $\rho_{\mathrm{ess}}(T)=\CC\setminus\sigma_{\mathrm{ess}}(T)$.

Following the work of Henry (\cite{He}), first, we describe the essential spectrum of $\cL$ in terms of the limiting operators $$\cL_\pm:H^2(\Omega\times\RR)\cap H^1_0(\Omega\times\RR)\subset L^2(\Omega\times\RR)\to L^2(\Omega\times\RR)$$ defined by
\begin{equation}\label{def-cL-pm}
(\cL_\pm u)(x,z)=\partial_x^2u(x,z)+\partial_z^2u(x,z)+c\partial_zu(x,z)+V_\pm(x)u(x,z).
\end{equation}
Instead of the standard approach, c.f. \cite{He,Sanst} based on exponential dichotomies and Palmer's Theorem \cite{BAG,Pa,Pal}, we use a direct argument based on sequences of approximate eigenfunctions to prove our first major result.
\begin{theorem}\label{t1.1}
Assume Hypotheses (H1)-(H3). Then, the following assertions hold true:
\begin{enumerate}
\item[(i)] $\sigma_{\mathrm{ess}}(\cL_\pm)=\big\{\mu-s^2+cis: s\in\RR,\,\mu\in\sigma(\partial_x^2+V_\pm)\big\}$;
\item[(ii)] $\sigma_{\mathrm{ess}}(\cL_+)\cup \sigma_{\mathrm{ess}}(\cL_-)\subseteq\sigma_{\mathrm{ess}}(\cL)$;
\item[(iii)] $\big\{\lambda\in\CC:\mathrm{Re}\,\lambda>\max\big\{\sup\sigma(\partial_x^2+V_+),\sup\sigma(\partial_x^2+V_-)\big\}\subseteq\rho_{\mathrm{ess}}(\cL)$;
\item[(iv)] $\sup\mathrm{Re}\,\sigma_{\mathrm{ess}}(\cL)=\max\big\{\sup\sigma(\partial_x^2+V_+),\sup\sigma(\partial_x^2+V_-)\big\}$.
\end{enumerate}
\end{theorem}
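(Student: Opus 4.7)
For (i), the key observation is that $\cL_\pm$ has separated variables: $\cL_\pm=(\pa_x^2+V_\pm)+(\pa_z^2+c\pa_z)$. Fourier transform in $z$ unitarily conjugates $\pa_z^2+c\pa_z$ on $L^2(\RR)$ to multiplication by $-s^2+cis$, so its spectrum is the parabola $\{-s^2+cis:s\in\RR\}$, which is purely essential. For the inclusion $\supseteq$, for each $\mu\in\sigma(\pa_x^2+V_\pm)$ and $s_0\in\RR$, I would exhibit a singular Weyl sequence
\[
u_n(x,z)=n^{-1/2}\phi_n(x)\,e^{is_0z}\,\chi(z/n),
\]
where $\chi\in C_c^\infty(\RR)$ has unit $L^2$-norm and $\phi_n$ is either an eigenfunction of $\pa_x^2+V_\pm$ at $\mu$ (available when $\Omega$ is bounded or $\mu$ is an isolated eigenvalue) or a Weyl sequence for this self-adjoint operator at $\mu$ (available in the unbounded case when $\mu$ lies in the continuous spectrum). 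A direct computation yields $(\cL_\pm-\lambda)u_n\to 0$ and $u_n\rightharpoonup 0$ with $\lambda=\mu-s_0^2+cis_0$, so $\lambda\in\sigma_{\mathrm{ess}}(\cL_\pm)$. Conversely, if $\lambda$ avoids the claimed set, then $\inf_{s\in\RR}\dist(\lambda+s^2-cis,\sigma(\pa_x^2+V_\pm))>0$ (the real part of $\lambda+s^2-cis$ tends to $+\infty$ as $|s|\to\infty$ while the target set lies in $\RR$), so the Fourier-symbol family $\pa_x^2+V_\pm-(\lambda+s^2-cis)$ is uniformly invertible in $s$, giving $\lambda\in\rho(\cL_\pm)\subseteq\rho_{\mathrm{ess}}(\cL_\pm)$.

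For (ii), I would take a singular Weyl sequence $(u_n)$ for $\cL_\pm$ at $\lambda$ whose supports (after multiplication by smooth $z$-cutoffs, using density of compactly supported test functions in the domain) lie in $\Omega\times[-R_n,R_n]$. The translates $\tilde u_n(x,z)=u_n(x,z\mp T_n)$, with $T_n\to+\infty$ chosen so that $\sup_{|z|\geq T_n-R_n}\|V(\cdot,z)-V_\pm(\cdot)\|_{L^\infty(\Omega)}\to 0$ (possible by (H1), and by (H3) when $\Omega$ is unbounded), form a singular Weyl sequence for $\cL$ at $\lambda$: the Laplacian and drift commute with $z$-translations, so $(\cL-\lambda)\tilde u_n=(\cL_\pm-\lambda)\tilde u_n+(V-V_\pm)\tilde u_n$, and both terms vanish in $L^2$.

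For (iii), fix $\lambda$ with $\Re\lambda>M:=\max\{\sup\sigma(\pa_x^2+V_+),\sup\sigma(\pa_x^2+V_-)\}$. By (i), $\cL_\pm-\lambda$ are boundedly invertible on $L^2(\Omega\times\RR)$. The plan is to derive an a priori estimate
\[
\|u\|_{H^2(\Omega\times\RR)}\leq C\bigl(\|(\cL-\lambda)u\|_{L^2}+\|\chi u\|_{L^2}\bigr),
\]
with $\chi$ a smooth $z$-cutoff supported in $[-Z,Z]$ for $Z$ large: outside $[-Z,Z]$ I apply the local representation $u=(\cL_\pm-\lambda)^{-1}(\cL-\lambda)u-(\cL_\pm-\lambda)^{-1}(V-V_\pm)u$ and absorb the last term using smallness of $\|V-V_\pm\|_{L^\infty(|z|>Z)}$, while on the slab $|z|\leq Z$ interior elliptic regularity provides the $H^2$-bound and renders $\|\chi u\|$ a compact lower-order term. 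This estimate implies $\cL-\lambda$ has closed range and finite-dimensional kernel; the same argument applied to $\cL^*-\bar\lambda$ gives finite-dimensional cokernel, hence Fredholmness. A homotopy $\Re\lambda\to+\infty$ (where coercivity yields invertibility by a Neumann series) shows the Fredholm index is zero throughout the connected half-plane $\{\Re\lambda>M\}$, so $\lambda\in\rho_{\mathrm{ess}}(\cL)$. Assertion (iv) is immediate: (i) and (ii) give the $\geq$ inequality, and (iii) gives the reverse.

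The main obstacle is precisely the a priori estimate in (iii): the commutators $[\chi,\cL]$ generate first-order derivatives that must be controlled via interior elliptic regularity, and the threshold $Z$ must be chosen large enough that $\|V-V_\pm\|_{L^\infty(|z|>Z)}\,\|(\cL_\pm-\lambda)^{-1}\|<1$, thereby enabling absorption of the error term into the left-hand side; the remaining ingredients---the Weyl sequences of (i) and (ii) and the homotopy argument for the index---are comparatively standard once the correct test functions, translations, and cutoffs are in place.
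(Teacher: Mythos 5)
Your (i) and (iv) match the paper in spirit: the paper conjugates $\cL_\pm$ by the Fourier transform in $z$ to the multiplication operator $M_{\cV_\pm}$ with symbol $\cV_\pm(s)=\partial_x^2+V_\pm-s^2+c\rmi s$, reads off the spectrum, and notes the set has no isolated points, so it is all essential spectrum; your Weyl-sequence and uniform-resolvent arguments fill in the same statement in more detail. Your (ii) is a genuinely more direct route than the paper's: the paper first introduces the auxiliary operator $\cL_\infty=\partial_x^2+\partial_z^2+c\partial_z+V_\infty$ with the piecewise-in-$z$ potential $V_\infty(x,z)=V_\pm(x)$ for $\pm z\geq 0$, proves $\sigma_{\mathrm{ess}}(\cL)=\sigma_{\mathrm{ess}}(\cL_\infty)$ by showing $M_{V-V_\infty}$ is relatively compact with respect to $\cL$ (approximating $V-V_\infty$ by the jointly compactly supported functions $K_n=\rho_n(|x|)\rho_n(z)(V-V_\infty)$, which is precisely where (H3) enters), and only then produces a Weyl sequence $\phi_n(x,z)=\varphi_n(x)u_n(z)$ supported in $z\geq n^2$ to show $\sigma_{\mathrm{ess}}(\cL_\pm)\subseteq\sigma(\cL_\infty)$. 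Translating an abstract Weyl sequence for $\cL_\pm$ to $\pm\infty$ and using (H1), as you do, works and bypasses $\cL_\infty$ entirely.

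For (iii) your route is quite different from the paper's, and there is a genuine gap. The paper simply proves the coercivity estimate $\Re\langle(\lambda_0-\cL_\infty)v,v\rangle\geq\eps_0\|v\|_2^2$ (and the same for $\cL_\infty^*$), which directly gives the right half-plane inside $\rho(\cL_\infty)$, then combines this with the already-established $\sigma_{\mathrm{ess}}(\cL)=\sigma_{\mathrm{ess}}(\cL_\infty)$; no Fredholm theory, commutators, or index homotopy are needed. Your a priori estimate $\|u\|_{H^2}\leq C(\|(\cL-\lambda)u\|_{L^2}+\|\chi u\|_{L^2})$ yields semi-Fredholmness only if the map $u\mapsto\chi u$ is compact from $H^2(\Omega\times\RR)\cap H^1_0(\Omega\times\RR)$ to $L^2(\Omega\times\RR)$; with $\chi$ a $z$-only cutoff, $\chi u$ is supported in $\Omega\times[-Z,Z]$, which is still unbounded whenever $\Omega$ is, so Rellich compactness fails and the estimate does not give finite-dimensional kernel or closed range. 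You would have to cut off in $x$ as well, and then the smallness you require of $V-V_\pm$ outside that compact set needs (H3), which your write-up does not invoke in (iii). There is also a localization mismatch: you apply the global inverses $(\cL_\pm-\lambda)^{-1}$ on all of $L^2(\Omega\times\RR)$ while arguing only on $\pm z>Z$, so you must glue the two half-lines and the middle slab with commutator estimates; the paper sidesteps this entirely by working with the single operator $\cL_\infty$ whose potential already matches $V_\pm$ on $\pm z\geq 0$. With these repairs (use $\cL_\infty$, cut off in both $x$ and $z$, invoke (H3)) your Fredholm-plus-index-homotopy argument would work, but the paper's coercivity argument is both shorter and avoids having to argue that the Fredholm index is preserved across the half-plane.
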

Our second result shows that adding the term $c\pa_z$ to the self-adjoint operator in \eqref{RD-linear} does not create non-real discrete eigenvalues to the right of the right most point of the essential spectrum. Naturally, we apply the substitution $v(x,z)=e^{cz/2}u(x,z)$ to reduce \eqref{RD-linear} to the self-adjoint case, c.f. \cite[Section 2.3.1]{KapProm}. When $z$ belongs to a bounded interval, obviously, the eigenvalues of $\cL$ in \eqref{RD-linear} are the same as the eigenvalues of the self-adjoint operator $\Delta_{x,z}+V-c^2/4$ and therefore are real. Since in our case $z\in\RR$, we must verify that the potential eigenfunction $v(x,z)=e^{cz/2}u(x,z)$ of the self-adjoint operator belongs to $H^2(\Omega\times\RR)\cap H^1_0(\Omega\times\RR)$. This requires information on the exponential rate of decay of the eigenfunctions of $\cL$. This information is obtained by passing from the second order equation $\cL u=\lambda u$ to the first order system $\pa_zY=A(z)Y$, as usual in spatial dynamics \cite{Kirsh,Mielke1,Mielke2,Sanst,SS-modulated}, see \eqref{eigenfuntion-firstorder}. In the $x$-independent case considered in \cite{HLS2} one can derive the required information on the Lyapunov exponents of this respective asymptotically autonomous system from the asymptotic systems that control the essential spectrum of $\cL$. In the $x$-dependent case considered in the present paper the asymptotically autonomous equation $\pa_zY=A(z)Y$ and the respective asymptotic equations are not well-posed, c.f. \cite{PSS,SS-modulated} and \cite{LP2}. The main technical point of this paper is therefore to overcome this difficulty, and to control the exponential rate of decay of the eigenfunctions of $\cL$ via the spectrum of $\cL_\pm$. This is done in Lemmas~\ref{l3.1}--\ref{l3.7} by means of stable bi-semigroups, c.f. \cite{BGK,LP2}. As a result, we arrive to the following second major result of the paper.

\begin{theorem}\label{t1.2} Assume Hypotheses (H1)-(H3). Then, the discrete spectrum of $\cL$ to the right of the right most point of the essential spectrum is real, that is
\begin{equation}\label{real-point-spectrum}
\big\{\lambda\in\sigma_{\mathrm{point}}(\cL):\mathrm{Re}\,\lambda>\sup\mathrm{Re}\,\sigma_{\mathrm{ess}}(\cL)\big\}\subseteq\RR.
\end{equation}
\end{theorem}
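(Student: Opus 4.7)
The plan is to exploit the symmetrizing substitution $v(x,z) = e^{cz/2}\,u(x,z)$, which \emph{formally} conjugates $\cL$ to the self-adjoint operator $\widetilde{\cL} := \Delta_{x,z} + V(x,z) - c^2/4$ on $L^2(\Omega\times\RR)$: a direct computation turns $\cL u = \lambda u$ into $\widetilde{\cL} v = \lambda v$. Since $\widetilde{\cL}$ is self-adjoint, it would suffice to certify that $v$ lies in the domain $H^2(\Omega\times\RR) \cap H^1_0(\Omega\times\RR)$; then $\lambda$ is automatically real. The entire proof thus reduces to showing that the eigenfunction $u$ and its derivatives of order up to two decay faster than $e^{-|c|z/2}$ at whichever spatial infinity the weight $e^{cz/2}$ grows.

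To extract the required decay I would recast the eigenvalue equation as a first-order spatial-dynamics system $\partial_z Y = A(z) Y$ with $Y = (u,\partial_z u)^\top$ and
\[
A(z) = \begin{pmatrix} 0 & I \\ \lambda - \partial_x^2 - V(\cdot,z) & -c \end{pmatrix}
\]
on an appropriate product space over $\Omega$. Hypotheses (H1)--(H3) yield $A(z) \to A_\pm$ as $z\to\pm\infty$ at an $L^1$-integrable rate via (H2), with $A_\pm$ built from $\partial_x^2 + V_\pm$. As the authors flag, the $A_\pm$ do not generate $C_0$-semigroups: their spectra, governed by the dispersion relation $s^2 + cs + \mu - \lambda = 0$ for $\mu \in \sigma(\partial_x^2 + V_\pm)$, are unbounded on both sides of the imaginary axis. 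The standard Palmer/Henry approach through exponential dichotomies is therefore unavailable, and one must replace it by the stable bi-semigroup machinery of \cite{BGK,LP2}; this is exactly what Lemmas~\ref{l3.1}--\ref{l3.7} will supply.

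The decisive spectral input is a uniform gap. Setting $\delta := \Re\,\lambda - \max\{\sup\sigma(\partial_x^2 + V_+),\ \sup\sigma(\partial_x^2 + V_-)\} > 0$, every $\mu \in \sigma(\partial_x^2+V_\pm)$ satisfies $\Re(c^2 + 4(\lambda - \mu)) \geq c^2 + 4\delta$, so the two dispersion roots $s_\pm(\mu) = \tfrac12\bigl(-c \pm \sqrt{c^2 + 4(\lambda-\mu)}\bigr)$ are separated in real part by more than $|c|$, uniformly in $\mu$. A brief case analysis on the sign of $c$ then shows that the decaying exponent at $+\infty$ sits strictly below $-c/2$ and the decaying exponent at $-\infty$ strictly above $-c/2$ --- exactly what the weight $e^{cz/2}$ demands. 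Feeding this gap into the bi-semigroup estimates of Lemmas~\ref{l3.1}--\ref{l3.7} yields some $\eta > |c|/2$ with $e^{\eta|z|}u \in H^2 \cap H^1_0$, so $v = e^{cz/2}u \in H^2 \cap H^1_0$, the equation $\widetilde{\cL} v = \lambda v$ is a bona fide self-adjoint eigenvalue problem, and $\lambda \in \RR$.

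The hard part is the middle step: producing quantitative exponential decay of eigenfunctions from a non-autonomous spatial-dynamics system whose asymptotic generators are ill-posed in both time directions. The bi-semigroups attached to the spectral splitting of $A_\pm$ take over the role of the stable/unstable projectors of a dichotomy and deliver Green's-kernel-type representations on each half-line $z>0$ and $z<0$, which are then glued together using the integrable convergence rate supplied by (H2).
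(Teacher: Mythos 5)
Your overall strategy coincides with the paper's: pass to $v=e^{cz/2}u$, observe that $v$ formally satisfies the self-adjoint eigenvalue problem $(\Delta_{x,z}+V-c^2/4)v=\lambda v$, and then the whole burden is to show $v\in H^2(\Omega\times\RR)\cap H^1_0(\Omega\times\RR)$, which is done by extracting exponential decay through a first-order spatial-dynamics reformulation and bi-semigroups. Two points are worth flagging, one structural and one a genuine slip.

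Structurally, you cast the first-order system in terms of $Y=(u,\partial_z u)^{\mathrm T}$ with the non-symmetric generator $A_\pm=\begin{bmatrix}0&I\\ \lambda-\partial_x^2-V_\pm&-c\end{bmatrix}$, whereas the paper writes the system for $Y_0=(v_0,\partial_z v_0)^{\mathrm T}$, which produces $A_\pm=\begin{bmatrix}0&I\\ \rmi\beta+T_\pm&0\end{bmatrix}$ with $T_\pm$ self-adjoint, bounded below, and $\beta=\Im\lambda_0$. That zero in the lower-right corner is not cosmetic: it is precisely the hypothesis of Lemma~\ref{l3.1}, whose proof diagonalizes $T_\pm$ by the spectral theorem, takes the principal square root of $\gamma^2+\rmi\beta$, and reads off the bi-semigroup from a $2\times2$ block similarity. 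Your $A_\pm$ is not of that form, so Lemma~\ref{l3.1} does not apply to it as written; to make your version go through you would have to conjugate the $-c$ away by the $z$-dependent similarity $\mathrm{diag}(e^{cz/2},e^{cz/2})\begin{bmatrix}1&0\\ c/2&1\end{bmatrix}$, which is exactly what passing to $v$ first accomplishes. So the paper's choice of variable is not an arbitrary preference; it is what makes the bi-semigroup lemma directly usable.

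The slip is in your closing sentence, ``yields some $\eta>|c|/2$ with $e^{\eta|z|}u\in H^2\cap H^1_0$.'' That is false in general and is not what the dispersion analysis (which you state correctly one sentence earlier) delivers. The two families of roots satisfy $\Re s_-<-c/2-\tfrac12\sqrt{c^2+4\delta}$ and $\Re s_+>-c/2+\tfrac12\sqrt{c^2+4\delta}$; hence $u$ decays at rate exceeding $c/2+\tfrac12\sqrt{c^2+4\delta}$ at $+\infty$ but only at rate exceeding $-c/2+\tfrac12\sqrt{c^2+4\delta}$ at $-\infty$ (assume $c>0$), and the latter can be arbitrarily small when $\delta$ is small. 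What is true, and what the paper's Lemma~\ref{l3.8}(i) actually proves, is that the \emph{symmetrized} function $v$ decays at rate at least $\tfrac12\sqrt{c^2+4\delta}>|c|/2$ at both ends; equivalently $u$ decays faster than $e^{-cz/2}$ on the side where the weight grows and slower on the other side. Since you only need $v\in H^2\cap H^1_0$, your argument still closes once this is restated correctly, but the claim about a symmetric super-$|c|/2$ rate for $u$ should be dropped.
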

\begin{remark}\label{rem-sys}
We emphasize that using the same techniques, it can be shown that the results of Theorem~\ref{t1.1} and Theorem~\ref{t1.2} hold true in the more general case of systems, where $f:\RR^N\to\RR^N$ provided the matrix-valued potential $V$ is symmetric. The argument requires only minor modifications.
\end{remark}
The difficulty in the proof of Theorem~\ref{t1.2} stems from the fact that the linearization $\cL$ is a multi-dimensional differential operator in $(x,z)\in\Omega\times\RR$. Therefore, we can see the linearization $\cL$ as a second order operator in $z\in\RR$ with operator valued potentials $\pa_x^2+V(\cdot,z)$, which is more general than the case of one-dimensional operators with matrix valued potential. In this case, the eigenvalue problem can be reduced to an infinite-dimensional, first order equation in the non-trivial space $H^1_0(\Omega)\times L^2(\Omega)$ that is not well-posed (see Section~\ref{s3} for details).

The paper is organized as follows: In Section~\ref{s2} we describe the essential spectrum of the linearization $\cL$, proving Theorem~\ref{t1.1}. In Section~\ref{s3} we prove that discrete eigenvalues (if any) to the right of the right most point of the essential spectrum of $\cL$ are real, proving Theorem~\ref{t1.2}. In Section~\ref{s4} we show how one can use the methods described in Section~\ref{s3} to recover the finite dimensional result of \cite{BoJo,HLS2}.

\section{Essential Spectrum}\label{s2}
In this section we describe the essential spectrum of the linear operator $\cL$, proving Theorem~\ref{t1.1}. Throughout this section we assume Hypotheses (H1)-(H3). First, we compute the essential spectrum of the limiting operators $\cL_\pm$. We use perturbation results to describe the connections between the essential spectra of $\cL$ and $\cL_\pm$.

\begin{lemma}\label{r2.1}
Assume Hypotheses (H1)-(H3). Then, the essential spectrum of the linear operator $\cL_\pm$ is given by
\begin{equation}\label{2.1-2}
\sigma_{\mathrm{ess}}(\cL_\pm)=\big\{\mu-s^2+cis: s\in\RR,\,\mu\in\sigma(\partial_x^2+V_\pm)\big\}.
\end{equation}
\end{lemma}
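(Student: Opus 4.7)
The plan is to exploit the fact that $V_\pm$ depends only on $x$, so $\cL_\pm$ diagonalises along the $z$-direction. I would apply the partial Fourier transform $\cF_z$ in $z$, which is unitary on $L^2(\Omega\times\RR)\cong L^2(\RR;L^2(\Omega))$; a direct computation on a core such as $C_c^\infty(\Omega)\otimes\cS(\RR_z)$, extended by closedness, yields
\begin{equation*}
\cF_z\,\cL_\pm\,\cF_z^{-1}=\int_\RR^{\oplus} T_\pm(s)\,\rmd s,\qquad T_\pm(s):=\pa_x^2+V_\pm(\cdot)-s^2+cis,
\end{equation*}
where each fiber $T_\pm(s)$ acts on $L^2(\Omega)$ with domain $H^2(\Omega)\cap H^1_0(\Omega)$.

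Each fiber is the self-adjoint Schr\"odinger operator $\pa_x^2+V_\pm$ shifted by the complex scalar $-s^2+cis$, hence is normal, with
\begin{equation*}
\sigma\!\br{T_\pm(s)}=\big\{\mu-s^2+cis:\mu\in\sigma(\pa_x^2+V_\pm)\big\}\quad\text{and}\quad\|(T_\pm(s)-\lambda)^{-1}\|=\dist(\lambda,\sigma(T_\pm(s)))^{-1}.
\end{equation*}
Writing $\Sigma_\pm$ for the set on the right of \eqref{2.1-2}, the direct-integral structure then gives both inclusions $\sigma(\cL_\pm)=\Sigma_\pm$: for $\lambda\notin\Sigma_\pm$ the fiberwise resolvent exists and is uniformly bounded in $s$, yielding $\lambda\in\rho(\cL_\pm)$; conversely, for each $s\in\RR$ and each $\mu\in\sigma(\pa_x^2+V_\pm)$ I would construct a singular Weyl sequence for $\cL_\pm-(\mu-s^2+cis)$ of the form $u_n(x,z)=\phi_n(x)\chi_n(z)e^{isz}$, with $\phi_n$ a normalized (approximate or exact) Weyl sequence for $\pa_x^2+V_\pm$ at $\mu$ and $\chi_n\in C_c^\infty(\RR)$ a spreading cutoff with $\|\chi_n\|_{L^2}=1$ and $\|\chi_n^{(j)}\|_{L^2}\to 0$ for $j=1,2$; a short computation then yields $\|(\cL_\pm-(\mu-s^2+cis))u_n\|_{L^2}\to 0$ and $u_n\rightharpoonup 0$.

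It remains to upgrade $\sigma(\cL_\pm)=\Sigma_\pm$ to $\sigma_{\mathrm{ess}}(\cL_\pm)=\Sigma_\pm$, i.e., to rule out isolated points of finite algebraic multiplicity. But for each fixed $\mu\in\sigma(\pa_x^2+V_\pm)$ the map $s\mapsto\mu-s^2+cis$ is a smooth non-constant curve in $\CC$, so every point of $\Sigma_\pm$ is an accumulation point of $\sigma(\cL_\pm)$ and therefore belongs to $\sigma_{\mathrm{ess}}(\cL_\pm)$. One must also check that $\Sigma_\pm$ is closed: in the bounded-$\Omega$ case the curves are indexed by a discrete sequence $\mu_k\to-\infty$ whose tails recede to $\Re\lambda=-\infty$, while in the unbounded case $\sigma(\pa_x^2+V_\pm)=(-\infty,\alpha_\pm]\cup\sigma_\rmd(\pa_x^2+V_\pm)$ is itself closed, so the image under the continuous proper map $(\mu,s)\mapsto\mu-s^2+cis$ is closed by a straightforward compactness argument. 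The main technical nuisance I anticipate is this closedness bookkeeping in the unbounded-$\Omega$ setting, together with producing weakly null Weyl sequences for $\mu\in\sigma_{\mathrm{ess}}(\pa_x^2+V_\pm)$; both are standard.
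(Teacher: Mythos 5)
Your approach is essentially the same as the paper's: both diagonalize $\cL_\pm$ via the partial Fourier transform in $z$, identify it with multiplication by the operator-valued symbol $\cV_\pm(s)=\partial_x^2+V_\pm-s^2+c\rmi s$, read off the spectrum from the fibers, and then observe that the resulting set has no isolated points so the spectrum is entirely essential. You spell out the two inclusions (fiberwise resolvent bound for one direction, a spreading Weyl sequence $\phi_n(x)\chi_n(z)e^{\rmi sz}$ for the other) and the closedness of $\Sigma_\pm$ more explicitly than the paper does, but this is just supplying detail the paper leaves implicit.
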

\begin{proof}
We note that taking Fourier Transform in $z\in\RR$, one can readily see that the limiting operator $\cL_\pm$ is similar to  the operator  $M_{\cV_\pm}$ of multiplication by the operator valued function $\cV_\pm(s)=\partial_x^2+V_\pm-s^2+c\rmi s$, $s\in\RR$. For each $s\in\RR$ we consider $\cV_\pm(s)$ as a closed, densely defined linear operator on $L^2(\Omega)$ with domain $H^2(\Omega)\cap H_0^1(\Omega)$.
We infer that
\begin{equation}\label{2.1-1}
\sigma(\cL_\pm)=\sigma(M_{\cV_\pm})=\big\{\mu-s^2+cis: s\in\RR,\,\mu\in\sigma(\partial_x^2+V_\pm)\big\}.
\end{equation}
Since the set above has no isolated points, we infer that the spectrum of $\cL_\pm$ consists entirely of essential spectrum. From \eqref{2.1-1} we conclude that
\begin{equation*}
\sigma_{\mathrm{ess}}(\cL_\pm)=\big\{\mu-s^2+cis: s\in\RR,\,\mu\in\sigma(\partial_x^2+V_\pm)\big\},
\end{equation*}
proving assertion (i) of Theorem~\ref{t1.1}.
\end{proof}

Next, we prove that the essential spectrum of $\cL_\pm$, computed in \eqref{2.1-1}, is contained in the essential spectrum of $\cL$. To prove this result we first show that the essential spectrum of the linear operator $\cL$ is equal to the essential spectrum of the linear operator
$$\cL_\infty: H^2(\Omega\times\RR)\cap H_0^1(\Omega\times\RR)\to L^2(\Omega\times\RR)$$  defined by
\begin{equation}\label{def-cL-infty}
\cL_\infty=\partial_x^2+\partial_z^2+c\partial_z+V_\infty,
\end{equation}
where $V_\infty:\Omega\times\RR\to\RR$ is defined by \begin{equation}\label{2.2-1}
V_\infty(x,z)=\left\{\begin{array}{l l}
	V_+(x) & \; \mbox{if $z\geq0$, }\\
	V_-(x) & \; \mbox{if $z<0$.}\\
	\end{array} \right.
\end{equation}
\begin{lemma}\label{ess-sp-equal}
Assume Hypotheses (H1)-(H3). Then, $\sigma_{\mathrm{ess}}(\cL)=\sigma_{\mathrm{ess}}(\cL_\infty)$.
\end{lemma}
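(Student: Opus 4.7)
The strategy is a relatively compact perturbation argument. Write $\cL=\cL_\infty+M_W$, where $M_W$ denotes multiplication by $W:=V-V_\infty\in L^\infty(\Omega\times\RR)$. Both operators are defined on the common domain $H^2(\Omega\times\RR)\cap H^1_0(\Omega\times\RR)$. Since the essential spectrum as defined in this paper (the Browder essential spectrum $\sigma\setminus\sigma_\rmd$) is invariant under relatively compact perturbations, it suffices to show that $M_W$ is $\cL_\infty$-compact, that is, $M_W(\cL_\infty-\lambda)^{-1}\in\cB(L^2(\Omega\times\RR))$ is compact for some $\lambda\in\rho(\cL_\infty)$.

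First I would extract from Hypotheses (H1) and (H3) the following decay of $W$ at infinity: for every $\varepsilon>0$ there exists $R=R(\varepsilon)>0$ such that $|W(x,z)|<\varepsilon$ whenever $|z|>R$ (by (H1), uniformly in $x\in\Omega$), and, when $\Omega$ is unbounded, also whenever $|x|>R$ (by (H3), uniformly in $z\in\RR$). Thus $W$ vanishes at infinity on $\Omega\times\RR$, and off the bounded set $K_R:=(\Omega\cap\overline{B_R(0)})\times[-R,R]$ its sup-norm is at most $\varepsilon$.

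Next, choose $\lambda\in\CC$ with $\mathrm{Re}\,\lambda$ larger than $\sup\mathrm{Re}\,\sigma(\cL_\infty)$, which is finite since $\cL_\infty$ is an elliptic operator with bounded coefficients (its spectrum can, in fact, be described by a variant of Lemma~\ref{r2.1}). Standard elliptic resolvent estimates then give that $R_\lambda:=(\cL_\infty-\lambda)^{-1}:L^2(\Omega\times\RR)\to H^2(\Omega\times\RR)\cap H^1_0(\Omega\times\RR)$ is bounded. Given a bounded sequence $\{f_n\}\subset L^2(\Omega\times\RR)$, set $g_n:=R_\lambda f_n$, which is bounded in $H^2(\Omega\times\RR)$. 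Let $\chi_R\in C_c^\infty(\RR^{d+1})$ be a cutoff with $\chi_R\equiv 1$ on $K_R$ and $\supp\chi_R\subset K_{2R}$, and split
\begin{equation*}
M_Wg_n=W\chi_R g_n+W(1-\chi_R)g_n.
\end{equation*}
The second piece has $L^2$-norm bounded by $\varepsilon\,\|g_n\|_{L^2}$, uniformly small for $R$ large. For the first, $\chi_R g_n$ is bounded in $H^2$ with support in the bounded set $K_{2R}$, so by the Rellich--Kondrachov theorem a subsequence converges in $L^2$; multiplying by the bounded function $W$ preserves this. A diagonal extraction in $R$ then yields a single subsequence for which $M_W g_n$ is Cauchy in $L^2(\Omega\times\RR)$, establishing the compactness of $M_W R_\lambda$.

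The main obstacle is the compactness step in the unbounded-$\Omega$ case: one must simultaneously localize in the $x$ and $z$ variables, and the required two-directional decay of $W$ is precisely what (H3) is designed to supply. A secondary, minor point is that $V_\infty$ is discontinuous across $\{z=0\}$, so that $\cL_\infty$ is not Fourier-decomposable in $z$ in the simple way that $\cL_\pm$ are; nevertheless its leading part is the elliptic operator $\Delta_{x,z}+c\partial_z$ and the potential is bounded, so the resolvent still maps $L^2$ boundedly into $H^2\cap H_0^1$, which is all the argument needs.
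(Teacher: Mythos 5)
Your argument is correct and follows essentially the same route as the paper: both reduce the lemma to showing that multiplication by $V-V_\infty$ is relatively compact, and both obtain this by combining the decay of $V-V_\infty$ at infinity in both $x$ and $z$ (from (H1) and (H3)) with the compact Sobolev embedding $H^2\hookrightarrow L^2$ over bounded subdomains. The only cosmetic difference is that the paper establishes $\cL$-relative compactness by approximating $M_{V-V_\infty}$ in operator norm by compactly supported multiplications $M_{K_n}$, whereas you establish $\cL_\infty$-relative compactness directly via a cutoff plus Rellich--Kondrachov plus diagonal extraction; the two are interchangeable.
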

\begin{proof}
Since $\cL=\cL_\infty+M_{V-V_\infty}$, where $M_{V-V_\infty}$ is the operator of multiplication on $L^2(\Omega\times\RR)$ by the bounded function $V-V_\infty$, to prove the lemma it is enough to
show that $M_{V-V_\infty}$ is relatively compact with respect to $\cL$. First, we approximate the operator $M_{V-V_\infty}$ by an operator of multiplication by a function with compact support.

Let $\{\rho_n\}_{n\geq 1}$ be a sequence of $C^\infty$ functions satisfying the conditions: $0\leq\rho_n\leq 1$, $\rho_n(z)=1$ for any $z\in [-n,n]$,
$\rho_n(z)=0$ if $|z|\geq n+1$. We define the sequence of functions $K_n:\Omega\times\RR\to\RR$ by
\begin{equation}\label{def-Kn}
K_n(x,z)=\rho_n(|x|)\rho_n(z)\big(V(x,z)-V_\infty(x,z)\big).
\end{equation}
Since $\supp K_n\subseteq\widetilde{\Omega}_n:=(\Omega\times\RR)\cap(-n-1,n+1)^{d+1}$ and $H^2(\widetilde{\Omega}_n)$ is compactly embedded in $L^2(\widetilde{\Omega}_n)$ for any $n\geq 1$, we infer that $M_{K_n}$, the linear operator on $L^2(\Omega\times\RR)$ of multiplication by $K_n$, is relatively compact with respect to $\cL$ for any $n\geq 1$. To finish the proof of lemma, it is enough to show that $M_{K_n}\to M_{V-V_\infty}$ as $n\to\infty$ in the operator norm.
From \eqref{2.2-1}, Hypothesis (H1) and Hypothesis (H3) it follows that
\begin{align}\label{V-Vinfty}
&\lim_{z\to\pm\infty}\|V(\cdot,z)-V_\infty(\cdot,z)\|_{L^\infty(\Omega)}=0,\nonumber\\
&\lim_{r\to\infty}\sup_{x\in\Omega,|x|\geq r}\|V(x,\cdot)-V_\infty(x,\cdot)\|_{L^\infty(\RR)}=0.
\end{align}
Using the definition of $K_n$ in \eqref{def-Kn} we obtain that
\begin{align}\label{m-est}
\|M_{K_n}&- M_{V-V_\infty}\|_{L^2\to L^2}=\|K_n-(V-V_\infty)\|_{L^\infty(\Omega\times\RR)}\nonumber\\
&=\esssup_{(x,z)\in\Omega\times\RR}\big(1-\rho_n(|x|)\rho_n(z)\big)|V(x,z)-V_\infty(x,z)| \nonumber\\
&\leq \esssup_{(x,z)\in\Omega\times\RR}\big(2-\rho_n(|x|)-\rho_n(z)\big)|V(x,z)-V_\infty(x,z)|\nonumber\\
&\leq\sup_{x\in\Omega}\big(|1-\rho_n(|x|)|\,\|V(x,\cdot)-V_\infty(x,\cdot)\|_{L^\infty(\RR)}\big)\nonumber\\
&\qquad+\sup_{z\in\RR}\big(|1-\rho_n(z)|\,\|V(\cdot,z)-V_\infty(\cdot,z)\|_{L^\infty(\Omega)}\big)\nonumber\\
&\leq\sup_{x\in\Omega,|x|\geq n}\big(\|V(x,\cdot)-V_\infty(x,\cdot)\|_{L^\infty(\RR)}\big)\nonumber\\
&\qquad+\sup_{|z|\geq n}\big(\|V(\cdot,z)-V_\infty(\cdot,z)\|_{L^\infty(\Omega)}\big).
\end{align}
From \eqref{V-Vinfty} and \eqref{m-est} we conclude that $\|M_{K_n}- M_{V-V_\infty}\|_{L^2\to L^2}\to 0$ as $n\to\infty$, proving the lemma.
\end{proof}
\begin{lemma}\label{l2.2}
Assume Hypotheses (H1)-(H3). Then, $\sigma_{\mathrm{ess}}(\cL_+)\cup \sigma_{\mathrm{ess}}(\cL_-)\subseteq\sigma_{\mathrm{ess}}(\cL)$.
\end{lemma}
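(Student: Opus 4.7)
The plan is to use Lemma~\ref{ess-sp-equal} to reduce the problem to showing $\sigma_{\mathrm{ess}}(\cL_\pm)\subseteq\sigma_{\mathrm{ess}}(\cL_\infty)$, and then to construct for each $\lambda\in\sigma_{\mathrm{ess}}(\cL_\pm)$ a singular Weyl sequence of $\cL_\infty$ at $\lambda$. Since the existence of such a sequence forces $\cL_\infty-\lambda$ to fail upper semi-Fredholmness, it places $\lambda$ in $\sigma_{\mathrm{ess}}(\cL_\infty)$ and therefore in $\sigma_{\mathrm{ess}}(\cL)$. The sequences associated with $\cL_+$ will be concentrated near $z=+\infty$, where $V_\infty\equiv V_+$ and the action of $\cL_\infty$ coincides with that of $\cL_+$; the construction for $\cL_-$ is symmetric.

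Fix $\lambda\in\sigma_{\mathrm{ess}}(\cL_+)$. By Lemma~\ref{r2.1} we may write $\lambda=\mu-s^2+c\rmi s$ for some $s\in\RR$ and $\mu\in\sigma(\pa_x^2+V_+)$, and self-adjointness of $\pa_x^2+V_+$ on $L^2(\Omega)$ produces a normalized sequence $\{\phi_n\}\subset H^2(\Omega)\cap H^1_0(\Omega)$ with $\|(\pa_x^2+V_+-\mu)\phi_n\|_{L^2(\Omega)}\to 0$. Choose $\eta\in C_c^\infty(\RR)$ with $\supp\eta\subset[-1,1]$ and $\|\eta\|_{L^2(\RR)}=1$, pick a sequence $z_n\to+\infty$ satisfying $z_n\geq 2n$, and set
\begin{equation*}
\chi_n(z)=n^{-1/2}\eta\!\left(\tfrac{z-z_n}{n}\right),\qquad u_n(x,z)=\phi_n(x)\chi_n(z)e^{\rmi sz}.
\end{equation*}
Then $u_n\in H^2(\Omega\times\RR)\cap H^1_0(\Omega\times\RR)$, $\|u_n\|_{L^2(\Omega\times\RR)}=1$, and $\supp u_n\subset\Omega\times[n,3n]$, so $V_\infty\equiv V_+$ on $\supp u_n$. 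A direct computation using $\lambda=\mu-s^2+c\rmi s$ produces
\begin{equation*}
(\cL_\infty-\lambda)u_n=\big[(\pa_x^2+V_+-\mu)\phi_n\big]\chi_n e^{\rmi sz}+\phi_n\big[\chi_n''+(2\rmi s+c)\chi_n'\big]e^{\rmi sz}.
\end{equation*}
By the tensor-product structure of the $L^2$ norm together with the scaling bounds $\|\chi_n^{(k)}\|_{L^2(\RR)}=O(n^{-k})$ for $k=1,2$, the right hand side tends to zero in $L^2(\Omega\times\RR)$.

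It remains to verify $u_n\rightharpoonup 0$ in $L^2(\Omega\times\RR)$: for any $g\in L^2$, Cauchy-Schwarz gives
\begin{equation*}
|\langle u_n,g\rangle|\leq\|u_n\|_{L^2}\Big(\int_{n}^{\infty}\int_\Omega|g(x,z)|^2\,dx\,dz\Big)^{1/2}\to 0,
\end{equation*}
since $\supp u_n\subset\Omega\times[n,\infty)$. Hence $\{u_n\}$ is a singular Weyl sequence for $\cL_\infty$ at $\lambda$, proving $\lambda\in\sigma_{\mathrm{ess}}(\cL_\infty)=\sigma_{\mathrm{ess}}(\cL)$. The case $\lambda\in\sigma_{\mathrm{ess}}(\cL_-)$ is identical after sending $z_n\to-\infty$. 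No serious obstacle is anticipated; the only delicate point is arranging the $z$-support of each $u_n$ to lie entirely in $\{z>0\}$ (respectively $\{z<0\}$) so that $\cL_\infty$ genuinely reduces to $\cL_\pm$ there, which is secured by choosing $z_n\geq 2n$ relative to the cutoff width $n$.
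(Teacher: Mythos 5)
Your proof is correct, and the core construction is essentially the paper's: a tensor product of a Weyl sequence $\{\phi_n\}$ for the self-adjoint transverse operator $\pa_x^2+V_+$ with a scaled, translated cut-off in $z$ carrying the oscillation $e^{\rmi sz}$, arranged so its $z$-support escapes to $+\infty$ and hence sits entirely in the region where $V_\infty\equiv V_+$. The genuine difference is in the final step. You verify $u_n\rightharpoonup 0$ and invoke the singular-Weyl-sequence characterization: a normalized, weakly null sequence with $(\cL_\infty-\lambda)u_n\to 0$ forces $\cL_\infty-\lambda$ to fail upper semi-Fredholmness, hence $\lambda$ cannot be an isolated eigenvalue of finite algebraic multiplicity, landing you directly in $\sigma_{\mathrm{ess}}(\cL_\infty)$. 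The paper stops at the weaker conclusion $\lambda\in\sigma(\cL_\infty)$ and then upgrades it by a topological observation: the set $\sigma_{\mathrm{ess}}(\cL_+)$, being a union of parabolic curves, has no isolated points, so its containment in $\sigma(\cL_\infty)$ automatically places it inside $\sigma(\cL_\infty)\setminus\sigma_{\rmd}(\cL_\infty)=\sigma_{\mathrm{ess}}(\cL_\infty)$. Both arguments are valid for the definition $\sigma_{\mathrm{ess}}(T)=\sigma(T)\setminus\sigma_{\rmd}(T)$ used here; yours is a bit more self-contained operator theory, the paper's avoids the Fredholm criterion at the cost of the no-isolated-points remark. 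One small inaccuracy in your writeup: you allow $z_n\geq 2n$ but then assert $\supp u_n\subset\Omega\times[n,3n]$, which requires $z_n=2n$ exactly; what your argument actually needs and actually uses is only $\supp u_n\subset\Omega\times[n,\infty)$, which does follow from $z_n-n\geq n$, so no harm is done.
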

\begin{proof}
First, we prove that $\sigma_{\mathrm{ess}}(\cL_+)\subseteq\sigma_{\mathrm{ess}}(\cL_\infty)$ using sequences of approximate eigenfunctions. Similarly, we can show that $\sigma_{\mathrm{ess}}(\cL_-)\subseteq\sigma_{\mathrm{ess}}(\cL_\infty)$. From Lemma~\ref{r2.1}, for a fixed $\lambda\in\sigma_{\mathrm{ess}}(\cL_+)$ we have that there exist $a\in\RR$ and $\mu\in\sigma(\partial_x^2+V_+)$ such that $\lambda=\mu-a^2+c\rmi a$. Since $\partial_x^2+V_+$ is a self-adjoint linear operator on $L^2(\Omega)$, we obtain that there exists a sequence  $\{\varphi_n\}_{n\geq 1}$  in $H^2(\Omega)\cap H_0^1(\Omega)$ (\cite[Chapter IX]{EdEv}) such that $\|\varphi_n\|_2=1$ for any $n\geq 1$ and
\begin{equation}\label{2.2-2}
f_n:=\varphi_n''+\big(V_+(\cdot)-\mu\big)\varphi_n\to 0\quad\mbox{in}\quad L^2(\Omega)\quad\mbox{as}\quad n\to\infty.
\end{equation}
We then take  $\psi\in C_0^\infty(\RR)$, $\psi\ne 0$, such that $\supp\psi\subseteq [0,1]$, and  define $u_n:\RR\to\CC$ by $u_n(z)=n^{-1/2}e^{\rmi az}\psi(n^{-1}z-n)$. Since $\supp\psi\subseteq [0,1]$ it follows that
\begin{equation}\label{2.2-3}
\supp u_n\subseteq [n^2,n^2+n]\quad\mbox{for any}\quad n\geq 1.
\end{equation}
Moreover, one can readily check that
\begin{equation}\label{2.2-4}
\|u_n\|_2^2=\frac{1}{n}\int_\RR |\psi(n^{-1}z-n)|^2\rmd z=\int_\RR |\psi(y)|^2\rmd y=\|\psi\|_2^2> 0
\end{equation}
and
\begin{equation}u_n''+cu_n'+(a^2-c\rmi a)u_n=(c+2\rmi a)v_n+w_n \text{ for any } n\geq 1, \label{275} \end{equation}  where $v_n,w_n:\RR\to\CC$ are defined by
\begin{equation}\label{2.2-5}
v_n(z)=\frac{e^{\rmi az}}{\sqrt{n^3}}\psi'(n^{-1}z-n),\quad w_n(z)=\frac{e^{\rmi az}}{\sqrt{n^5}}\psi'(n^{-1}z-n).
\end{equation}
Integrating, we obtain that
\begin{equation}\label{2.2-6}
\|v_n\|_2=\frac{1}{n}\|\psi'\|_2\quad\mbox{and}\quad \|w_n\|_2=\frac{1}{n^2}\|\psi''\|_2\quad\mbox{for any}\quad n\geq 1.
\end{equation}
We note that assertions \eqref{2.2-4}, \eqref{2.2-5}, and \eqref{2.2-6} show that $\{u_n\}_{n\geq 1}$ is a sequence of approximate eigenvalues of the constant coefficient operator $\partial_z^2+c\partial_z$. From \eqref{2.2-3} we have that $\supp u_n\subseteq\RR_+$, therefore
\begin{equation}\label{2.2-7}
u_n(z)V_\infty(x,z)=u_n(z)V_+(x)\quad\mbox{for any}\quad n\geq 1,\; x\in\Omega,\;z\in\RR.
\end{equation}
We introduce $\phi_n:\Omega\times\RR\to\CC$ by $\phi_n(x,z)=\varphi_n(x)u_n(z)$.
From  \eqref{275} and \eqref{2.2-7} it follows that for any $n\geq 1$, $x\in\Omega$ and $z\in\RR$,
\begin{align}\label{2.2-8}
\big((\cL_\infty-\lambda)\phi_n\big)(x,z)&=\varphi_n''(x)u_n(z)+\varphi_n(x)u_n''(z)+c\varphi_n(x)u_n'(z)\nonumber\\
&\quad+V_\infty(x,z)\varphi_n(x)u_n(z)-\lambda\varphi_n(x)u_n(z)\nonumber\\
&=\varphi_n(x)\big(u_n''(z)+cu_n'(z)+(a^2-c\rmi a)u_n(z)\big)\nonumber\\
&\quad+u_n(z)\big(\varphi_n''(x)+\big(V_+(x)-\mu\big)\varphi_n(x)\big)\nonumber\\
&=f_n(x)u_n(z)+\big((c+2\rmi a)v_n(z)+w_n(z)\big)\varphi_n(x).
\end{align}
 Since $\|\varphi_n\|_2=1$ for any $n\geq 1$ from \eqref{2.2-4} we infer that
\begin{equation}\label{2.2-9}
\|\phi_n\|_2=\|\varphi_n\|_2\|u_n\|_2=\|\psi\|_2>0\quad\mbox{for any}\quad n\geq 1.
\end{equation}
In addition, from \eqref{2.2-2} and \eqref{2.2-6} we have that $f_n\to 0$ in $L^2(\Omega)$, $v_n\to 0$ and $w_n\to 0$ in $L^2(\RR)$ as $n\to\infty$. From \eqref{2.2-8} we conclude that
\begin{equation}\label{2.2-10}
(\cL_\infty-\lambda)\phi_n\to 0\quad\mbox{in}\quad L^2(\Omega\times\RR)\quad\mbox{as}\quad n\to\infty.
\end{equation}
From \eqref{2.2-9} and \eqref{2.2-10} it follows that $\lambda\in\sigma(\cL_\infty)$. Summarizing, we have that
\begin{equation}\label{2.2-11}
\sigma_{\mathrm{ess}}(\cL_+)=\big\{\mu-s^2+cis: s\in\RR,\,\mu\in\sigma(\partial_x^2+V_+)\big\}\subseteq\sigma(\cL_\infty).
\end{equation}
Since $\sigma_{\mathrm{ess}}(\cL_+)$ does not have isolated points and $\sigma_{\mathrm{ess}}(\cL_\infty)=\sigma_{\mathrm{ess}}(\cL)$, we conclude that $\sigma_{\mathrm{ess}}(\cL_+)\subseteq\sigma_{\mathrm{ess}}(\cL)$, thus  proving the lemma.
\end{proof}
Next, we prove that the resolvent set of the linear operator $\cL_\infty$ defined in \eqref{def-cL-infty} contains a right half-plane.

\begin{lemma}\label{l2.3}
Assume Hypotheses (H1)-(H3). Then, the following inclusion holds true:
$$\big\{\lambda\in\CC:\mathrm{Re}\,\lambda>\max\{\sup\sigma(\partial_x^2+V_+),\sup\sigma(\partial_x^2+V_-)\}\big\}\subseteq\rho(\cL_\infty).$$
\end{lemma}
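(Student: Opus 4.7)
The plan is to establish the inclusion via a numerical-range estimate on $\cL_\infty$. Set $\alpha^*:=\max\big\{\sup\sigma(\partial_x^2+V_+),\sup\sigma(\partial_x^2+V_-)\big\}$. For any $u\in H^2(\Omega\times\RR)\cap H^1_0(\Omega\times\RR)$, I would integrate by parts in $x$ (using the Dirichlet condition on $\partial\Omega\times\RR$) and in $z$ (using that $z\mapsto\|u(\cdot,z)\|^2_{L^2(\Omega)}$ lies in $W^{1,1}(\RR)$ and hence vanishes at $\pm\infty$), together with the skew-symmetry of $c\partial_z$ which kills the real part of $c\langle\partial_z u,u\rangle$, to obtain
\[
\mathrm{Re}\langle\cL_\infty u,u\rangle=-\|\partial_z u\|_2^2+\int_\RR\big\langle\big(\partial_x^2+V_\infty(\cdot,z)\big)u(\cdot,z),u(\cdot,z)\big\rangle_{L^2(\Omega)}\,\rmd z.
\]
Since $V_\infty(\cdot,z)$ is $V_+$ for $z\geq 0$ and $V_-$ for $z<0$, and the operators $\partial_x^2+V_\pm$ are self-adjoint on $L^2(\Omega)$ with $\sup\sigma(\partial_x^2+V_\pm)$ given by the supremum of their Rayleigh quotients, this yields
\[
\mathrm{Re}\langle\cL_\infty u,u\rangle\leq\alpha^*\|u\|_2^2,
\]
so the numerical range of $\cL_\infty$ is contained in $\{\zeta\in\CC:\mathrm{Re}\,\zeta\leq\alpha^*\}$.

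Fix $\lambda\in\CC$ with $\mathrm{Re}\,\lambda>\alpha^*$. Applying Cauchy-Schwarz to the above estimate produces the lower bound $\|(\cL_\infty-\lambda)u\|_2\geq(\mathrm{Re}\,\lambda-\alpha^*)\|u\|_2$ for every $u\in\dom(\cL_\infty)$, so $\cL_\infty-\lambda$ is injective with closed range. For surjectivity I would run the same argument on the Hilbert adjoint: since $V_\infty$ is real and $(c\partial_z)^*=-c\partial_z$, the adjoint is $\cL_\infty^*=\partial_x^2+\partial_z^2-c\partial_z+V_\infty$ on the same domain $H^2(\Omega\times\RR)\cap H^1_0(\Omega\times\RR)$, and the sign flip of $c$ is invisible to the quadratic-form computation above. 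Hence $\mathrm{Re}\langle\cL_\infty^* v,v\rangle\leq\alpha^*\|v\|_2^2$, so $\cL_\infty^*-\bar\lambda$ is injective. Consequently $\ran(\cL_\infty-\lambda)^{\perp}=\ker(\cL_\infty^*-\bar\lambda)=\{0\}$, and combined with closed range this forces $\ran(\cL_\infty-\lambda)=L^2(\Omega\times\RR)$. The lower bound then furnishes the bounded inverse, giving $\lambda\in\rho(\cL_\infty)$.

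The only delicate point I expect is the identification of the domain of the Hilbert adjoint $\cL_\infty^*$; this is standard theory for second-order elliptic operators on cylinders with Dirichlet data on $\partial\Omega\times\RR$, but if one prefers to bypass it, the whole argument can be recast directly in terms of the sesquilinear form $(u,v)\mapsto\langle(\cL_\infty-\lambda)u,v\rangle$ on $H^1(\Omega\times\RR)\cap H^1_0$, with coercivity furnished by exactly the same Rayleigh-quotient estimate and Lax-Milgram supplying the inverse.
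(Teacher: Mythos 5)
Your proposal is correct and follows essentially the same route as the paper: a numerical-range (dissipativity) estimate $\mathrm{Re}\langle(\cL_\infty-\lambda)u,u\rangle\leq-(\mathrm{Re}\,\lambda-\alpha^*)\|u\|_2^2$ built from the self-adjointness of $\partial_x^2+V_\pm$, combined with the same estimate for the adjoint (obtained by flipping the sign of $c$), yielding injectivity, closed range, and trivial cokernel simultaneously. The paper streamlines the domain-of-adjoint issue you flag by simply noting $\dom(\cL_\infty^*)=\dom(\cL_\infty)$ and writing the estimate once for all $v\in\dom(\cL_\infty)=\dom(\cL_\infty^*)$, whereas you state it as a separate adjoint computation and offer a Lax--Milgram fallback; both are fine, and the difference is cosmetic.
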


\begin{proof} Fix $\lambda_0\in\CC$ such that $\mathrm{Re}\,\lambda_0>\sup\sigma(\partial_x^2+V_\pm)$. Since the linear operators
$\partial_x^2+V_\pm$ are self-adjoint, from the min-max principle (see, e.g. \cite{RD4}), we have that there exists $\eps_0>0$ such that $\partial_x^2+V_\pm\leq(\mathrm{Re}\,\lambda_0-\eps_0)I$, that is
\begin{equation}\label{2.3-1}
\big\langle (\partial_x^2+V_\pm-\mathrm{Re}\,\lambda_0)g,g\big\rangle_{L^2(\Omega)}\leq -\eps_0\|g\|^2_{2}\quad  \mbox{ for any }\, g\in H^2(\Omega)\cap H_0^1(\Omega).
\end{equation}
We define the operator valued function
\begin{equation}\label{2.3-2}
F_\infty(z)=\left\{\begin{array}{l l}
	\partial_x^2+V_+ & \; \mbox{if $z\geq0$, }\\
	\partial_x^2+V_- & \; \mbox{if $z<0$.}\\
	\end{array} \right. %,\quad z\in\RR.
\end{equation}
For each $z\in\RR$, we consider $F_\infty(z)$ as a closed, densely defined linear operator on $L^2(\Omega)$ with domain $H^2(\Omega)\cap H_0^1(\Omega)$.
The inequality \eqref{2.3-1} is equivalent to
\begin{equation}\label{2.3-3}
\big\langle (F_\infty(z)-\mathrm{Re}\,\lambda_0)g,g\big\rangle_{L^2(\Omega)}\leq -\eps_0\|g\|^2_{2}\;\mbox{for any}\; z\in\RR,\;g\in H^2(\Omega)\cap H_0^1(\Omega).
\end{equation}
The linear operator $\cL_\infty$ admits the representation
\begin{equation}\label{2.3-4}
\cL_\infty=\partial_z^2+c\partial_z+F_\infty.
\end{equation}
Since $F_\infty(z)$ is self-adjoint for any $z\in\RR$ it follows that $\dom(\cL_\infty^*)=\dom(\cL_\infty)$.
Moreover, from \eqref{2.3-3} we obtain that
\begin{align}\label{2.3-5}
\mathrm{Re}\,\big\langle (\lambda_0-\cL_\infty)v,v\big\rangle_{L^2(\Omega\times\RR)}&=\langle (\mathrm{Re}\,\lambda_0-\mathrm{Re}\,\cL_\infty)v,v\rangle_{L^2(\Omega\times\RR)}\nonumber\\
&=\mathrm{Re}\,\lambda_0\|v\|_{2}^2+\|\partial_zv\|_{2}^2-\langle F_\infty(\cdot)v,v\rangle_{L^2(\Omega\times\RR)}\nonumber\\
&\geq\langle (\mathrm{Re}\,\lambda_0-F_\infty(\cdot))v,v\rangle_{L^2(\Omega\times\RR)}\nonumber\\
&=\int_\RR \langle (\mathrm{Re}\,\lambda_0-F_\infty(z))v(\cdot,z),v(\cdot,z))\rangle_{L^2(\Omega)}\rmd z\nonumber\\
&\geq\eps_0\int_\RR \langle v(\cdot,z),v(\cdot,z))\rangle_{L^2(\Omega)}\rmd z=\eps_0\|v\|_{2}^2
\end{align}
for any $v\in\dom(\cL_\infty)=\dom(\cL_\infty^*)$. From \eqref{2.3-5} we conclude that $\ker(\overline{\lambda}_0-\cL_\infty^*)=\{0\}$ and
\begin{equation}\label{2.3-6}
\|(\lambda_0-\cL_\infty)v\|_{L^2(\Omega\times\RR)}\geq\eps_0\|v\|_{L^2(\Omega\times\RR)}\quad\mbox{for any}\quad v\in\dom(\cL_\infty),
\end{equation}
which implies that $\lambda_0-\cL_\infty$ is invertible, proving the lemma.
\end{proof}

\noindent{\textbf{Proof of Theorem~\ref{t1.1}.}} We have now all the ingredients needed to describe the essential spectrum of $\cL$. Assertions (i) and (ii) were proved in Lemma~\ref{r2.1} and Lemma~\ref{l2.2}, respectively. Since $\sigma_{\mathrm{ess}}(\cL_\infty)=\sigma_{\mathrm{ess}}(\cL)$ by Lemma~\ref{ess-sp-equal}, assertion (iii) follows shortly from Lemma~\ref{l2.3}. Assertion (iv) is a direct consequence of (ii) and (iii).

\section{Bi-semigroups and eigenfunction decay rate}\label{s3}

In this section we prove that the eigenvalues to the right of the right most point of the  essential spectrum of $\cL$ must be real. The main idea of the proof  is to make a change of variables in the eigenvalue equation and then prove that the eigenvalues of $\cL$ to the right of the right most point of the essential spectrum are eigenvalues of a second order, self-adjoint linear operator. To achieve this goal we need to estimate the decay rate of eigenfunctions by reducing the second order eigenvalue equation to a first order linear differential equation on a suitable Hilbert space, and then use perturbation results to prove Theorem~\ref{t1.2}.

We assume $\lambda_0\in\sigma_{\rmd}(\cL)$ is such that $\mathrm{Re}\,\lambda_0>\sup\mathrm{Re}\,\sigma_{\mathrm{ess}}(\cL)$. Therefore, there exists an eigenfunction $u_0\in H^2(\Omega\times\RR)\cap H_0^1(\Omega\times\RR)$ satisfying
\begin{equation}\label{eigenfuntion-u0}
\partial_x^2u_0+\partial_z^2u_0+c\partial_zu_0+V(x,z)u_0=\lambda_0u_0.
\end{equation}
Next, we introduce the function $v_0:\Omega\times\RR\to\CC$ by $v_0(x,z)=e^{cz/2}u_0(x,z)$. One can readily check that $v_0\in H^2_{\mathrm{loc}}(\Omega\times\RR)$ and
\begin{equation}\label{eigenfuntion-v0}
\partial_x^2v_0+\partial_z^2v_0+\big(V(x,z)-\frac{c^2}{4}\big)v_0=\lambda_0v_0.
\end{equation}
To prove our result we need to show that $v_0$ is a genuine eigenfunction, that it belongs to $H^2(\Omega\times\RR)\cap H_0^1(\Omega\times\RR)$. To do this, we  study its decay rate using the spatial dynamics method by treating $z\in\RR$ as time in \eqref{eigenfuntion-v0}. First, we introduce $w_0=\partial_zv_0\in H^1_{\mathrm{loc}}(\Omega\times\RR)$. The pair of functions $(v_0,w_0)^{\mathrm{T}}$ satisfies the first order differential equation
\begin{equation}\label{eigenfuntion-firstorder}
\partial_z\begin{pmatrix}v_0\\w_0\end{pmatrix}=A(z)\begin{pmatrix}v_0\\w_0\end{pmatrix},
\end{equation}
where for each $z\in\RR$ the linear operator $A(z)$ acting in the space $H^1_0(\Omega)\times L^2(\Omega)$ is defined by
\begin{equation}\label{def-A}
A(z)=\begin{bmatrix}
0 & I \\
\lambda_0+\frac{c^2}{4}-V(\cdot,z)-\partial_x^2 & 0
\end{bmatrix}
\end{equation}
with domain $\dom(A(z))=\big(H^2(\Omega)\cap H_0^1(\Omega)\big)\times H^1_0(\Omega)$.
We note that for each $z\in\RR$ the linear operator $A(z)$ can be obtained as the bounded perturbation of the linear operator acting in the space $H^1_0(\Omega)\times L^2(\Omega)$ and defined by
\begin{equation}\label{def-Apm}
A_\pm=\begin{bmatrix}
0 & I \\
\lambda_0+\frac{c^2}{4}-V_\pm(\cdot)-\partial_x^2 & 0
\end{bmatrix}.
\end{equation}
with domain $\dom(A_\pm)=\big(H^2(\Omega)\cap H_0^1(\Omega)\big)\times H^1_0(\Omega)$. The spectrum of the linear operator $A_\pm$ can be computed in terms of the spectrum of the self-adjoint operator $\partial_x^2+V_\pm$ as follows:
\begin{equation}\label{square-root-spectrum}
\sigma(A_\pm)=\Big\{\pm\sqrt{\lambda_0+c^2/4-\mu}:\mu\in\sigma(\partial_x^2+V_\pm)\Big\}.
\end{equation}
Hence, the real part of the spectrum of $A_\pm$ is unbounded from below and from above, therefore it cannot generate a $C^0$-semigroup.
We conclude that equation \eqref{eigenfuntion-firstorder} is not well-posed.
\begin{remark}\label{BackForward-uniqueness} In the case of the not well-posed first order differential equation \eqref{eigenfuntion-firstorder} one cannot immediately infer the existence of backward nor forward propagators.
Hence, when studying exponential dichotomy of \eqref{eigenfuntion-firstorder}, in \cite{PSS,LP2} the following backward-forward uniqueness was assumed:
\begin{itemize}
\item[(i)] If $Y$ is a solution of \eqref{eigenfuntion-firstorder} on $\RR$ and $Y(z_0)=0$ for some $z_0\in\RR$, then $Y\equiv0$;
\item[(ii)] If $Z$ is a solution of the adjoint equation $Z'=-A(z)^*Z$
on $\RR$ and $Z(z_0)=0$ for some $z_0\in\RR$, then $Z\equiv0$.
\end{itemize}
This assumption was used to prove exponential dichotomies, more precisely, to prove the existence and uniqueness of solutions on the semi-lines $(-\infty,a]$ and $[b,\infty)$ that decay exponentially in backward and forward time, respectively.
In our case we do not need to assume this hypothesis since the only solution of \eqref{eigenfuntion-firstorder} we are particularly interested in, $(w_0,\pa_zv_0)^{\mathrm{T}}$, exists because $\lambda_0$ is a discrete eigenvalue of $\cL$, therefore we have a non-zero solution of
equations \eqref{eigenfuntion-u0} and \eqref{eigenfuntion-v0}.
\end{remark}
Our first task is to prove that $A_\pm$ generates a bi-semigroup on $H_0^1(\Omega)\times L^2(\Omega)$. Here we recall that a closed, densely defined linear operator $G$ generates a bi-semigroup on a Hilbert space $\bH$, if there exist $\bH_\rms$ and $\bH_\rmu$ two closed subspaces of $\bH$ invariant under $G$ such that $\bH=\bH_\rms\oplus\bH_\rmu$ (here $\oplus$ is a direct sum, not necessarily orthogonal) and $G_{|\bH_\rms}$ and $-G_{|\bH_\rmu}$ generate $C^0$ semigroups on $\bH_\rms$ and $\bH_\rmu$, denoted by $\{T_\rms(z)\}_{z\geq 0}$ and $\{T_\rmu(z)\}_{z\geq 0}$, respectively. We say that the bi-semigroup has decay rate $-\nu<0$ if there exists $C>0$ such that
\begin{equation}\label{stabe-bi-sem}
\|T_\rms(z)\|\leq Ce^{-\nu z}\quad\mbox{and}\quad\|T_\rmu(z)\|\leq Ce^{-\nu z}\quad\mbox{for any}\quad z\geq 0.
\end{equation}
To prove that $A_\pm$ generates a bi-semigroup on $\bH=H^1_0(\Omega)\times L^2(\Omega)$ we use the following abstract result.
\begin{lemma}\label{l3.1}
Assume $\bH$ is a Hilbert space, $\beta\in\RR$ and $T:\dom(T)\subset\bH\to\bH$ is a closed, densely defined, self-adjoint linear operator on $\bH$ satisfying the condition
\begin{equation}\label{spectrum-T}
\sigma(T)\subseteq [\alpha,\infty)\quad\mbox{for some}\quad \alpha>0.
\end{equation}
If $\bY=\dom(|T|^{1/2})\times\bH$, then the linear operator $G:\dom(G)\subset\bY\to\bY$ defined by
\begin{equation}\label{def-G}
\dom(G)=\dom(T)\times\dom(|T|^{1/2}),\quad G=\begin{bmatrix}
0 & I \\
\rmi\beta+T & 0
\end{bmatrix}
\end{equation}
generates an exponentially stable bi-semigroup on $\bY$ having decay rate $-\nu$, satisfying the condition $\nu\geq\sqrt{\alpha}$.
\end{lemma}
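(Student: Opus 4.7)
The plan is to construct the bi-semigroup directly via the functional calculus for the normal operator $T+\rmi\beta$. Since $T$ is self-adjoint with $\sigma(T)\subseteq[\alpha,\infty)$, the operator $T+\rmi\beta$ is normal with spectrum $\{t+\rmi\beta:t\geq\alpha\}$ lying in the open right half-plane. Applying the Borel functional calculus to the principal branch of $z\mapsto z^{1/2}$, I obtain a closed normal operator
$$R:=(T+\rmi\beta)^{1/2},\qquad \sigma(R)=\{(t+\rmi\beta)^{1/2}:t\geq\alpha\},$$
so that $\mathrm{Re}\,\sigma(R)\geq\sqrt{\alpha}$; in particular $0\in\rho(R)$ and $R^{-1}\in\cB(\bH,\dom(R))$. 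Using $|R|=(T^2+\beta^2)^{1/4}$ together with the spectral inequalities $\alpha I\leq T\leq(T^2+\beta^2)^{1/2}\leq T+|\beta|I\leq(1+|\beta|/\alpha)T$, I conclude that $\dom(R)=\dom(|T|^{1/2})$ with equivalent Hilbert norms.

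Next I would exhibit the stable and unstable subspaces as graphs of $\mp R$. Define
$$\bY_\rms:=\{(u,-Ru):u\in\dom(R)\},\qquad \bY_\rmu:=\{(u,Ru):u\in\dom(R)\},$$
both closed in $\bY=\dom(|T|^{1/2})\times\bH$ since $R$ is closed. For any $(u,v)\in\bY$ the identity
$$(u,v)=\Bigl(\tfrac{u-R^{-1}v}{2},-R\tfrac{u-R^{-1}v}{2}\Bigr)+\Bigl(\tfrac{u+R^{-1}v}{2},R\tfrac{u+R^{-1}v}{2}\Bigr)$$
produces a decomposition in $\bY_\rms+\bY_\rmu$, and the invertibility of $R$ shows that this decomposition is unique with bounded projections; thus $\bY=\bY_\rms\oplus\bY_\rmu$. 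The maps $\Phi_\rms:u\mapsto(u,-Ru)$ and $\Phi_\rmu:u\mapsto(u,Ru)$ are Banach space isomorphisms from $\dom(R)$ (with its graph norm) onto $\bY_\rms,\bY_\rmu$, and using $R^2=T+\rmi\beta$ a direct computation shows that $G$ preserves each subspace on its natural domain, with $\Phi_\rms^{-1}G|_{\bY_\rms}\Phi_\rms=-R$ and $\Phi_\rmu^{-1}G|_{\bY_\rmu}\Phi_\rmu=+R$, both acting on $\dom(R^2)=\dom(T)$.

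It remains to check that $-R$ generates an exponentially decaying $C^0$-semigroup on $\dom(R)$. Since $R$ is normal with $\mathrm{Re}\,\sigma(R)\geq\sqrt{\alpha}$, the Borel functional calculus produces a bounded semigroup $\{e^{-Rz}\}_{z\geq 0}$ on $\bH$ satisfying $\|e^{-Rz}\|_{\cB(\bH)}\leq e^{-\sqrt{\alpha}\,z}$, and because $e^{-Rz}$ is a bounded Borel function of $R$ it commutes with $R$ on $\dom(R)$, so the same bound persists in the graph norm. Transporting these restrictions through $\Phi_\rms$ and $\Phi_\rmu$ produces $C^0$-semigroups on $\bY_\rms$ and $\bY_\rmu$ generated by $G|_{\bY_\rms}$ and $-G|_{\bY_\rmu}$ respectively, each with norm bounded by a constant times $e^{-\sqrt{\alpha}\,z}$. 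This is exactly a bi-semigroup with decay rate $-\nu$ for some $\nu\geq\sqrt{\alpha}$, as claimed.

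The main technical point I expect to require care is the identification $\dom(R)=\dom(|T|^{1/2})$ with equivalent norms, since the whole construction — in particular the closedness of $\bY_\rms,\bY_\rmu$ inside $\bY$ and the boundedness of the direct sum projections — depends on it. Once that is secured, everything else reduces to algebraic manipulation within the functional calculus of the normal operator $T+\rmi\beta$ and to routine semigroup estimates for normal generators.
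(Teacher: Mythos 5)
Your proof is correct and takes essentially the same approach as the paper: both pass through the square root of the normal operator $T+\rmi\beta$ furnished by the Spectral Theorem and split $\bY$ into the two blocks on which $G$ acts as $\mp R$ (in the paper, $\mp S$ after conjugating with the unitary $U$). The paper realizes this as an explicit similarity $G = (\diag U, U)^{-1}W^{-1}\,\diag(-S,S)\,W\,(\diag U,U)$ in a multiplication-operator picture of $T$, while you package the very same decomposition as graph subspaces $\{(u,\mp Ru)\}$ — these are precisely the preimages of the two coordinate blocks under the paper's $W\,\diag(U,U)$, so the content is identical.
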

\begin{proof}
Since $T$ is a closed, densely defined, positive self-adjoint linear operator on $\bH$, by the Spectral Theorem we have that there exists $(\Gamma,\mu)$,  a measure space, $\gamma:\Gamma\to\RR$, a $\mu$-measurable function, and $U:\bH\to L^2(\Gamma,\mu)$, a unitary operator, such that
\begin{equation}\label{3.1-1}
T=U^*M_{\gamma^2}U.
\end{equation}
Here $M_{\gamma^2}$ denotes the operator of multiplication on $L^2(\Gamma,\mu)$ by the function $\gamma^2$. Using this representation one can readily check that
\begin{align}\label{3.1-2}
\dom(|T|)&=\{h\in\bH:\gamma^2Uh\in L^2(\Gamma,\mu)\},\nonumber\\
\dom(|T|^{1/2})&=\{h\in\bH:|\gamma|Uh\in L^2(\Gamma,\mu)\}.
\end{align}
From \eqref{spectrum-T} and \eqref{3.1-1} we obtain that $\sigma(M_{\gamma^2})=\sigma(T)=[\alpha,\infty)$. Since the spectrum of multiplication operators is given by their essential range, we infer that
\begin{equation}\label{3.1-3}
|\gamma(\omega)|\geq \sqrt{\alpha}\;\mbox{for}\; \mu-\mbox{almost all}\;\;\omega\in\Gamma.
\end{equation}
Let $S:\dom(M_{|\gamma|})\to L^2(\Gamma,\mu)$ be the multiplication operator by the complex valued function $\sqrt{\gamma^2+\rmi\beta}$. Here $\sqrt{\lambda}$ denotes the principal branch of the complex square root of $\lambda\in\CC$. Using \eqref{3.1-1} we have that
\begin{equation}\label{3.1-4}
T+\rmi\beta=U^*S^2U.
\end{equation}
From \eqref{3.1-3} it follows that
\begin{equation}\label{3.1-5}
\mathrm{Re}\,\sqrt{\gamma^2(\omega)+\rmi\beta}=\sqrt{\frac{\gamma^2(\omega)+\sqrt{\gamma^4(\omega)+\beta^2}}{2}}\geq |\gamma(\omega)|\geq\sqrt{\alpha}
\end{equation}
for $\mu$-almost all $\omega\in\Gamma$, which implies that $S=M_{\sqrt{\gamma^2+\rmi\beta}}$ is invertible with bounded inverse. We define $W:\dom(M_{|\gamma|})\times L^2(\Gamma,\mu)\to L^2(\Gamma,\mu)\times L^2(\Gamma,\mu)$ by
\begin{equation}\label{3.1-6}
W=\frac{1}{\sqrt{2}}\begin{bmatrix}
S & -I \\
S & I
\end{bmatrix}.
\end{equation}
Since $S$ is invertible with bounded inverse, we infer that $W$ is invertible with bounded inverse and
\begin{equation}\label{3.1-7}
W^{-1}=\frac{1}{\sqrt{2}}\begin{bmatrix}
S^{-1} & S^{-1} \\
-I & I
\end{bmatrix}.
\end{equation}
Let $\widetilde{G}:\dom(M_{|\gamma|})\times \dom(M_{|\gamma|})\to L^2(\Gamma,\mu)\times L^2(\Gamma,\mu)$ be the linear operator defined by
\begin{equation}\label{3.1-8}
\widetilde{G}=\frac{1}{\sqrt{2}}\begin{bmatrix}
-S & 0 \\
0 & S
\end{bmatrix}.
\end{equation}
From \eqref{3.1-5} we obtain that the linear operator $-S=M_{-\sqrt{\gamma^2+\rmi\beta}}$ generates a $C^0$-semigroup having decay rate $-\nu$, for some $\nu\geq\sqrt{\alpha}$. Hence, $\widetilde{G}$ generates a stable bi-semigroup having decay rate $-\nu$. From \eqref{3.1-4} and \eqref{3.1-6}--\eqref{3.1-8} we conclude that
\begin{equation}\label{3.1-9}
G=\begin{bmatrix}
0 & I \\
\rmi\beta+T & 0
\end{bmatrix}=\begin{bmatrix}
U^{-1} & 0 \\
0& U^{-1}
\end{bmatrix}
W^{-1}\widetilde{G}W
\begin{bmatrix}
U & 0 \\
0& U
\end{bmatrix},
\end{equation}
proving the lemma.
\end{proof}
\begin{lemma}\label{l3.2}
Assume Hypotheses (H1)-(H3) and $\Re\;\lambda_0>\sup\Re\,\sigma_{\mathrm{ess}}(\cL)$. Then, the linear operator $A_\pm$ defined in \eqref{def-Apm} generates an exponentially stable bi-semigroup on the Hilbert space $H_0^1(\Omega)\times L^2(\Omega)$ having decay rate $-\nu_\pm$ satisfying the condition $\nu_\pm>|c|/2$.
\end{lemma}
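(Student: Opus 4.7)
The strategy is to recognise $A_\pm$ as a concrete instance of the abstract operator $G$ of Lemma~\ref{l3.1}. Decomposing $\lambda_0=\Re\,\lambda_0+\rmi\Im\,\lambda_0$, I would set
\begin{equation*}
T_\pm:=\Re\,\lambda_0+\tfrac{c^2}{4}-V_\pm(\cdot)-\partial_x^2\quad\text{on}\quad L^2(\Omega),\qquad \beta:=\Im\,\lambda_0,
\end{equation*}
so that the nontrivial entry of $A_\pm$ in \eqref{def-Apm} equals $T_\pm+\rmi\beta$ and $A_\pm$ takes exactly the form \eqref{def-G}.

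Next I would verify the hypotheses of Lemma~\ref{l3.1} for $T_\pm$. Since $V_\pm\in L^\infty(\Omega,\RR)$ and $-\partial_x^2$ (with Dirichlet boundary conditions when $\Omega\ne\RR^d$) is a non-negative self-adjoint operator on $L^2(\Omega)$ with domain $H^2(\Omega)\cap H_0^1(\Omega)$, the real bounded perturbation $T_\pm$ is self-adjoint on the same domain, and its form domain coincides with that of $-\partial_x^2$, giving $\dom(|T_\pm|^{1/2})=H_0^1(\Omega)$. Consequently the ambient Hilbert space in Lemma~\ref{l3.1} is $\bY=H_0^1(\Omega)\times L^2(\Omega)$ and $\dom(G)=\bigl(H^2(\Omega)\cap H_0^1(\Omega)\bigr)\times H_0^1(\Omega)$, which matches $\dom(A_\pm)$. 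Using the trivial scalar shift $\sigma(T_\pm)=\Re\,\lambda_0+\tfrac{c^2}{4}-\sigma(\partial_x^2+V_\pm)$ together with assertion (iv) of Theorem~\ref{t1.1} and the standing hypothesis $\Re\,\lambda_0>\sup\Re\,\sigma_{\mathrm{ess}}(\cL)$, one obtains
\begin{equation*}
\sigma(T_\pm)\subseteq[\alpha_\pm,\infty),\qquad \alpha_\pm:=\Re\,\lambda_0+\tfrac{c^2}{4}-\sup\sigma(\partial_x^2+V_\pm)>\tfrac{c^2}{4}\geq 0.
\end{equation*}

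With these pieces in place, Lemma~\ref{l3.1} applied to $T_\pm$ and $\beta$ produces an exponentially stable bi-semigroup generated by $A_\pm$ on $H_0^1(\Omega)\times L^2(\Omega)$ with decay rate $-\nu_\pm$ satisfying $\nu_\pm\geq\sqrt{\alpha_\pm}>|c|/2$, which is exactly the conclusion of the lemma. The only step that requires more than routine bookkeeping is the identification of the form domain $\dom(|T_\pm|^{1/2})=H_0^1(\Omega)$; this follows from the standard stability of form domains under real bounded perturbations of semi-bounded self-adjoint operators, and I do not expect it to pose any serious obstacle.
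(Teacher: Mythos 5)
Your proof is correct and takes essentially the same route as the paper: define $T_\pm=\Re\,\lambda_0+\tfrac{c^2}{4}-\partial_x^2-V_\pm$, set $\beta=\Im\,\lambda_0$, identify $\dom(|T_\pm|^{1/2})=H_0^1(\Omega)$, invoke Theorem~\ref{t1.1}(iv) to get the lower bound on $\sigma(T_\pm)$, and apply Lemma~\ref{l3.1}. The only cosmetic difference is that you keep separate constants $\alpha_\pm$ where the paper works with the single (slightly smaller) constant $\alpha_*=\Re\,\lambda_0-\sup\Re\,\sigma_{\mathrm{ess}}(\cL)+\tfrac{c^2}{4}$; this changes nothing in the conclusion.
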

\begin{proof}
The lemma follows from Lemma~\ref{l3.1} above and Theorem~\ref{t1.1}(iv), describing the connection between $\sigma_{\mathrm{ess}}(\cL)$ and $\sigma(\pa_x^2+V_\pm)$. Indeed,
\begin{equation}\label{3.2-1}
T_\pm=\mathrm{Re}\,\lambda_0+\frac{c^2}{4}-\pa_x^2-V_\pm(x)
\end{equation}
is a closed, densely defined, self-adjoint linear operator on $L^2(\Omega)$ having domain $\dom(T_\pm)=H^2(\Omega)\cap H_0^1(\Omega)$. Moreover, since $\mathrm{Re}\,\lambda_0>\sup\mathrm{Re}\,\sigma_{\mathrm{ess}}(\cL)=\max\{\max\sigma(\partial_x^2+V_+),\max\sigma(\partial_x^2+V_-)\}$, we infer that
\begin{equation}\label{3.2-2}
\sigma(T_\pm)\subseteq [\alpha_*,\infty),\quad\mbox{where}\quad\alpha_*=\mathrm{Re}\,\lambda_0-\sup\mathrm{Re}\,\sigma_{\mathrm{ess}}(\cL)+\frac{c^2}{4}.
\end{equation}
In addition, one can readily check that $\dom(|T_\pm|^{1/2})=H_0^1(\Omega)$. Applying Lemma~\ref{l3.1}, we conclude that
\begin{equation}\label{3.2-3}
A_\pm=\begin{bmatrix}
0 & I \\
\rmi\mathrm{Im}\,\lambda_0+T_\pm & 0
\end{bmatrix}
\end{equation}
generates an exponentially stable bi-semigroup on $H_0^1(\Omega)\times L^2(\Omega)$ having decay rate $-\nu_\pm$. From \eqref{3.2-2} and since $\mathrm{Re}\,\lambda_0>\sup\mathrm{Re}\,\sigma_{\mathrm{ess}}(\cL)$ it follows that $\nu_\pm\geq\sqrt{\alpha_*}>|c|/2$.
\end{proof}
We turn our attention to equation $Y'=A(z)Y$. We note that it is equivalent to
\begin{equation}\label{ill-possed}
Y'=\big(A_\pm+B_\pm(z)\big)Y,
\end{equation}
where the operator valued function $B_\pm:\RR\to\cB(H_0^1(\Omega)\times L^2(\Omega))$ is defined by
\begin{equation}\label{def-Bpm}
B_\pm(z)=\begin{bmatrix}
0 & 0 \\
V_\pm(\cdot)-V(\cdot,z)&0\end{bmatrix}.
\end{equation}
\begin{lemma}\label{r3.3}
Assume Hypotheses (H1)-(H3). Then, the following assertions hold true:
\begin{enumerate}
\item[(i)] $B_\pm$ is strongly continuous;
\item[(ii)] $\|B_\pm(\cdot)\|_{\cB(H_0^1(\Omega)\times L^2(\Omega))}\in L^1(\RR_\pm)\cap L^\infty(\RR_\pm)$.
\end{enumerate}
\end{lemma}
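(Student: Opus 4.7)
The plan is to derive both assertions from a single uniform operator-norm estimate on $B_\pm(z)$, namely
$$\|B_\pm(z)\|_{\cB(H_0^1(\Omega)\times L^2(\Omega))} \;\le\; \|V(\cdot,z)-V_\pm(\cdot)\|_{L^\infty(\Omega)} \quad \text{for every } z\in\RR.$$
To establish this I would fix $(v,w)$ in the closed unit ball of $H_0^1(\Omega)\times L^2(\Omega)$. From the definition \eqref{def-Bpm} the first component of $B_\pm(z)(v,w)^{\mathrm{T}}$ is zero and the second is the $L^2$-function $(V_\pm(\cdot)-V(\cdot,z))v$, so the direct chain of inequalities
$$\|(V_\pm(\cdot)-V(\cdot,z))v\|_{L^2(\Omega)} \le \|V_\pm(\cdot)-V(\cdot,z)\|_{L^\infty(\Omega)}\,\|v\|_{L^2(\Omega)} \le \|V_\pm(\cdot)-V(\cdot,z)\|_{L^\infty(\Omega)},$$
using the continuous embedding $\|v\|_{L^2(\Omega)}\le\|v\|_{H_0^1(\Omega)}\le 1$, gives the claim upon taking the supremum over $(v,w)$.

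Part (ii) is then immediate, since the dominating function $z\mapsto\|V(\cdot,z)-V_\pm(\cdot)\|_{L^\infty(\Omega)}$ belongs to $L^1(\RR_\pm)\cap L^\infty(\RR_\pm)$ by Hypothesis (H2), and this property is inherited by the pointwise-smaller function $z\mapsto\|B_\pm(z)\|_{\cB(H_0^1(\Omega)\times L^2(\Omega))}$. For part (i), I would apply the same pointwise multiplication estimate to a difference at two points $z_1,z_2\in\RR$: for each fixed $(v,w)\in H_0^1(\Omega)\times L^2(\Omega)$,
$$\bigl\|B_\pm(z_1)(v,w)^{\mathrm{T}}-B_\pm(z_2)(v,w)^{\mathrm{T}}\bigr\|_{H_0^1(\Omega)\times L^2(\Omega)} \;\le\; \|V(\cdot,z_2)-V(\cdot,z_1)\|_{L^\infty(\Omega)}\,\|v\|_{L^2(\Omega)}.$$
Consequently, strong continuity of $B_\pm$ reduces to continuity of the map $z\mapsto V(\cdot,z)$ with values in $L^\infty(\Omega)$, which is part of the standing regularity of the potential (and, in the setting of Example~\ref{example-Hamel}, automatic from the smoothness of $\ou$ in $(x,z)$).

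I do not anticipate a serious obstacle here: the lemma is essentially a bookkeeping translation of Hypothesis (H2) into a statement about the operator-valued coefficient $B_\pm$ that feeds the perturbed spatial-dynamics system \eqref{ill-possed}. The only subtlety worth flagging is the role of the continuous embedding $H_0^1(\Omega)\hookrightarrow L^2(\Omega)$, which is what converts a pointwise-in-$x$ multiplication estimate into control of the operator norm on the full product space $H_0^1(\Omega)\times L^2(\Omega)$; this is needed because the single non-zero entry of $B_\pm(z)$ acts only on the first coordinate $v$, so one must bound $\|(V_\pm-V(\cdot,z))v\|_{L^2(\Omega)}$ in terms of $\|v\|_{H_0^1(\Omega)}$ rather than $\|v\|_{L^2(\Omega)}$ in the supremum.
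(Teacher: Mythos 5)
Your part (ii) is exactly the paper's argument: the pointwise block structure of $B_\pm(z)$ gives
$\|B_\pm(z)\|_{\cB(H_0^1(\Omega)\times L^2(\Omega))}\le\|V(\cdot,z)-V_\pm(\cdot)\|_{L^\infty(\Omega)}$
after invoking $\|f\|_{L^2(\Omega)}\le\|f\|_{H^1(\Omega)}$, and then (H2) gives membership in $L^1(\RR_\pm)\cap L^\infty(\RR_\pm)$.

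For part (i) your route is different and leans on a materially stronger assumption than the paper uses. You bound $\|(B_\pm(z_1)-B_\pm(z_2))(v,w)^{\mathrm{T}}\|$ by $\|V(\cdot,z_1)-V(\cdot,z_2)\|_{L^\infty(\Omega)}\|v\|_{L^2(\Omega)}$ and then call on continuity of $z\mapsto V(\cdot,z)$ in $L^\infty(\Omega)$; this in fact proves \emph{norm} continuity of $B_\pm$, which is more than the lemma asserts, and it requires $V(\cdot,z)\to V(\cdot,z_0)$ uniformly in $x$. For unbounded $\Omega$ that uniformity is \emph{not} a consequence of $V$ being bounded and jointly continuous: $V(x,z)=\sin(xz)$ on $\RR\times\RR$ has $\|V(\cdot,z_1)-V(\cdot,z_2)\|_{L^\infty(\RR)}=2$ for all $z_1\ne z_2$, so $z\mapsto V(\cdot,z)$ is nowhere $L^\infty$-continuous. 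The paper proves exactly the stated \emph{strong} continuity by a dominated-convergence argument: for fixed $(f,g)$ the only nonzero component of $(B_\pm(z_1)-B_\pm(z_2))(f,g)^{\mathrm{T}}$ is $(V(\cdot,z_1)-V(\cdot,z_2))f$, and
\[
\int_\Omega |V(x,z_1)-V(x,z_2)|^2|f(x)|^2\,\rmd x\longrightarrow 0 \quad (z_1\to z_2),
\]
since the integrand tends to $0$ pointwise (using only pointwise continuity of $V$ in $z$) and is dominated by $4\|V\|_\infty^2|f(x)|^2\in L^1(\Omega)$. This uses less (no uniformity in $x$), concludes precisely what is needed for the mild-solution machinery of Definition~\ref{d3.4} and Lemma~\ref{l3.7}, and works on unbounded $\Omega$ without further caveats. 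Your argument is fine in the bounded-$\Omega$/periodic setting of Example~\ref{example-Hamel}, but you should either add the $L^\infty$-continuity hypothesis explicitly or switch to the DCT argument to cover the general case.
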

\begin{proof} Let $f\in H_0^1(\Omega)$ and $g\in L^2(\Omega)$. From \eqref{def-Bpm} we have that
\begin{equation*}
 \big(B_\pm(z_1)-B_\pm(z_2)\big)(f,g)^{\mathrm{T}}=(0,V(\cdot,z_1)f-V(\cdot,z_2)f)^{\mathrm{T}}\;\mbox{for any}\; z_1,\,z_2\in\RR.
\end{equation*}
Assertion (i) follows from Lebesgue Dominated Convergence Theorem and the fact that $V(\cdot,\cdot)$ is bounded and continuous on $\Omega\times\RR$. Moreover,
\begin{align*}
 \big\|B_\pm(z)(f,g)^{\mathrm{T}}\big\|_{H_0^1(\Omega)\times L^2(\Omega}^2&=\int_\RR |V(x,z)-V_\pm(x)|^2|f(x)|^2\rmd x\\
&\leq \|f\|_{L^2(\Omega)}^2\|V(\cdot,z)-V_\pm(\cdot)\|_{L^\infty(\Omega)}^2\\
&\leq \|f\|_{H^1(\Omega)}^2\|V(\cdot,z)-V_\pm(\cdot)\|_{L^\infty(\Omega)}^2
\end{align*}
which implies that $\|B_\pm(z)\|\leq \|V(\cdot,z)-V_\pm(\cdot)\|_{L^\infty(\Omega)}$ for any $z\in\RR$. From Hypothesis (H2), we immediately infer that $\|B_\pm(\cdot)\|\in L^1(\RR_\pm)\cap L^\infty(\RR_\pm)$.
\end{proof}
In what follows, we denote by $\{T_{\rms/\rmu}^\pm(z)\}_{z\geq 0}$ the semigroups generated by $A_\pm$ on its invariant stable/unstable subspaces $\bH_{\rms/\rmu}^\pm$. Furthermore, we denote by $P_{\rms/\rmu}^\pm$ the projection into $\bH_{\rms/\rmu}^\pm$ parallel to $\bH_{\rmu/\rms}^\pm$. We recall from \cite{PSS} the definition of mild solutions of the not well-posed equation \eqref{ill-possed}.
\begin{definition}\label{d3.4}
We say that a continuous function $Y:[a,b]\to H_0^1(\Omega)\times L^2(\Omega)$ is a \textit{mild solution} of \eqref{ill-possed} on $[a,b]$ if the following equation holds:
\begin{align}\label{mild}
Y(z)&=T_\rms^\pm(z-a)P_\rms^\pm Y(a)+\int_a^z T_\rms^\pm(z-\zeta)P_\rms^\pm B_\pm(\zeta)Y(\zeta)\rmd\zeta\nonumber\\
&+T_\rmu^\pm(b-z)P_\rmu^\pm Y(b)-\int_z^b T_\rmu^\pm(\zeta-z)P_\rmu^\pm B_\pm(\zeta)Y(\zeta)\rmd\zeta
\end{align}
for any $z\in [a,b]$.
\end{definition}
\begin{remark}\label{r3.5}
Using a frequency domain reformulation, it was shown in \cite{LP2} that \eqref{mild} is equivalent to
\begin{equation}\label{mild-F}
\big((\cF-M_{R_\pm}\cF M_{B_\pm})Y_{|[a,b]}\big)(\xi)=R_\pm(\xi)(e^{-2\pi\rmi\xi a}Y(a)-e^{-2\pi\rmi\xi b}Y(b))
\end{equation}
for any $\xi\in\RR$, where $R_\pm:\RR\to\cB(H_0^1(\Omega)\times L^2(\Omega))$ is defined by $R_\pm(\xi)=(2\pi\rmi\xi-A_\pm)^{-1}$, $M_{R_\pm}$ and $M_{B_\pm}$ denote the operators of multiplication on $L^2(\RR,H_0^1(\Omega)\times L^2(\Omega))$ by the operator valued functions $R_\pm$ and $B_\pm$, respectively.
\end{remark}
\begin{remark}\label{r3.6}
Going back to the eigenvalue problems \eqref{eigenfuntion-u0}--\eqref{eigenfuntion-v0}, since $u_0\in H^2(\Omega\times\RR)\cap H^1_0(\Omega\times\RR)$ and $v_0(x,z)=e^{cz/2}u_0(x,z)$, we can immediately infer that the function $Y_0:\RR\to H_0^1(\Omega)\times L^2(\Omega)$ defined by
\begin{equation}\label{def-Y0}
Y_0(z)=(v_0(\cdot,z),\pa_zv_0(\cdot,z))^{\mathrm{T}}
\end{equation}
is continuous. Taking Fourier transform in \eqref{eigenfuntion-v0}, we infer that $Y_0$ satisfies equation
\eqref{mild-F} for any $a<b$. From Remark~\ref{r3.5} conclude that $Y_0$ is a mild solution of equation $Y'=\big(A_\pm+B_\pm(z)\big)Y$ on $[a,b]$ for any $a<b$.
\end{remark}
Next, we study the decay rates of mild solutions of equation \eqref{ill-possed}. We use the following abstract lemma that extends the results of Daletskii and Krein (\cite{DK}) from the case of well-posed equations.
\begin{lemma}\label{l3.7}
Assume $G$ is the generator of an exponentially stable bi-semigroup $\{T_{\rms/\rmu}(z)\}_{z\geq0}$ on a Hilbert space $\bH$ having decay rate $-\nu$ for some $\nu>0$, $F:\RR\to\cB(\bH)$ is a piecewise strongly continuous operator valued function such that $\|F(\cdot)\|_{\cB(\bH)}\in L^1(\RR_+)\cap L^\infty(\RR_+)$. If $u$ is a mild solution of equation $u'=(G+F(z))u$ on $[a,\infty)$ satisfying the condition
$\|u(z)\|\leq Me^{\theta z}$ for any $z\geq a$, for some $\theta<\nu$, then for any $\delta\in (0,\nu)$ there exists $N>0$ such that $\|u(z)\|\leq Ne^{-\delta z}$ for any $z\geq a$.
\end{lemma}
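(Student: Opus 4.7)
The plan is to control the decay of $u$ via a contraction argument in an exponentially weighted Banach space, exploiting both the bi-semigroup decay and the $L^1$-smallness of $F$ in the tail.

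First, I would derive a half-line mild solution formula on $[s,\infty)$ for arbitrary $s\ge a$. The restriction of $u$ to $[s,b]$ is itself a mild solution on $[s,b]$: applying $P_\rms$ to the defining identity \eqref{mild} at $z=s$ and using the identity $P_\rms T_\rmu(\cdot)P_\rmu=0$ yields $P_\rms u(s)=T_\rms(s-a)P_\rms u(a)+\int_a^s T_\rms(s-\zeta)P_\rms F(\zeta)u(\zeta)\,d\zeta$, and then the semigroup property $T_\rms(z-s)T_\rms(s-\zeta)=T_\rms(z-\zeta)$ rewrites the first two terms of \eqref{mild} as $T_\rms(z-s)P_\rms u(s)+\int_s^z T_\rms(z-\zeta)P_\rms F(\zeta)u(\zeta)\,d\zeta$. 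The a priori bound $\|u(z)\|\le Me^{\theta z}$ with $\theta<\nu$ ensures that the boundary contribution satisfies $\|T_\rmu(b-z)P_\rmu u(b)\|\le CMe^{\nu z}e^{(\theta-\nu)b}\to 0$ as $b\to\infty$ and that $\int_z^\infty T_\rmu(\zeta-z)P_\rmu F(\zeta)u(\zeta)\,d\zeta$ converges absolutely, so the limit identity
\[u(z)=T_\rms(z-s)P_\rms u(s)+\int_s^z T_\rms(z-\zeta)P_\rms F(\zeta)u(\zeta)\,d\zeta-\int_z^\infty T_\rmu(\zeta-z)P_\rmu F(\zeta)u(\zeta)\,d\zeta\]
holds for every $z\ge s$.

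Next, for every $\beta\in(-\nu,\nu)$ I would work in the weighted Banach space $X_\beta:=\{v\in C([s,\infty),\bH):\|v\|_\beta:=\sup_{z\ge s}e^{\beta z}\|v(z)\|<\infty\}$ and let $\mathcal{T}$ denote the operator defined by the right-hand side of the limit identity. Weight-shifted estimates based on $\|T_{\rms/\rmu}(\tau)\|\le Ce^{-\nu\tau}$ and on the bounds $e^{(\beta-\nu)(z-\zeta)}\le 1$ (forward leg, since $\beta<\nu$) and $e^{(\beta+\nu)(z-\zeta)}\le 1$ (backward leg, since $\beta>-\nu$) give
\[\|\mathcal{T}v\|_\beta\le Ce^{\beta s}\|P_\rms u(s)\|+2C\|F\|_{L^1[s,\infty)}\|v\|_\beta,\qquad\|\mathcal{T}v_1-\mathcal{T}v_2\|_\beta\le 2C\|F\|_{L^1[s,\infty)}\|v_1-v_2\|_\beta.\]
Since $\|F(\cdot)\|\in L^1(\RR_+)$, I can choose $s$ large enough that $2C\|F\|_{L^1[s,\infty)}<\tfrac{1}{2}$, at which point $\mathcal{T}$ is a strict contraction on $X_\beta$ simultaneously for every $\beta\in(-\nu,\nu)$, with a unique fixed point $v_\beta^*$.

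Finally, I would identify $u$ with the fixed point in $X_\delta$. If $\theta\le-\delta$ the a priori bound already yields $\|u(z)\|\le Me^{-\delta z}$, so assume $\theta>-\delta$; then $-\theta\in(-\nu,\delta)$ and $u\in X_{-\theta}$, while the limit identity says $\mathcal{T}u=u$, so $u=v_{-\theta}^*$. Because $\delta\ge-\theta$ we have the continuous inclusion $X_\delta\subseteq X_{-\theta}$, hence $v_\delta^*$ is also a fixed point of $\mathcal{T}$ in $X_{-\theta}$; uniqueness forces $v_\delta^*=u$, so $u\in X_\delta$ and $\|u(z)\|\le\|v_\delta^*\|_\delta e^{-\delta z}$ for $z\ge s$. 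The bound extends to all of $[a,\infty)$ by absorbing $\max_{[a,s]}\|u\|$ into the constant $N$. The main obstacle I anticipate is the rigorous derivation of the half-line identity in the first step: because the first-order problem is ill-posed, one cannot invoke an abstract propagator to pass to the limit $b\to\infty$, so the argument must proceed term by term, relying essentially on $P_\rms T_\rmu(\cdot)P_\rmu=0$ and on the a priori bound $\theta<\nu$ to kill the boundary contribution and to secure absolute convergence of the $P_\rmu$-tail; once this identity is in place, the weighted contraction and the uniqueness identification are essentially routine.
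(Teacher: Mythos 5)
Your proof is correct, and it reaches the conclusion by a genuinely different route from the paper's. Both arguments begin by deriving the half-line mild representation: one passes $b\to\infty$ in the two-point identity \eqref{mild}, using $\theta<\nu$ together with $F\in L^\infty(\RR_+)$ to kill the unstable boundary term $T_\rmu(b-z)P_\rmu u(b)$ and to secure absolute convergence of the $P_\rmu$-tail; this is the paper's \eqref{3.7-4}, which you re-derive with an arbitrary starting point $s\ge a$ using $P_\rms T_\rmu(\cdot)P_\rmu=0$ and the semigroup law. From there the two proofs diverge. The paper extracts the scalar convolution inequality
$\|u(z)\|\le Me^{-\nu(z-a)}+M\int_a^\infty e^{-\nu|z-\zeta|}\,\|F(\zeta)\|\,\|u(\zeta)\|\,\rmd\zeta$
and then simply invokes the Daletskii--Krein lemma \cite[Chapter III, Lemma 2.2]{DK}, a Gronwall-type estimate for kernels of this form which yields decay at every rate $\delta<\nu$. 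You instead run a Banach fixed-point argument: define the mild-solution operator $\mathcal{T}$ on the exponentially weighted spaces $X_\beta$ over $[s,\infty)$, use the $L^1$-smallness of $\|F\|$ on the far tail to make $\mathcal{T}$ a strict contraction with a constant independent of $\beta\in(-\nu,\nu)$, and identify $u$ with the unique fixed point in $X_\delta$ via the continuous nesting $X_\delta\hookrightarrow X_{-\theta}$. The paper's route is shorter once the Daletskii--Krein lemma is taken as known; yours is entirely self-contained, makes the role of $\|F\|\in L^1$ completely transparent, and yields the bound for all $\delta\in(0,\nu)$ simultaneously after a single choice of $s$. Both proofs use $F\in L^\infty$ only to justify the limit $b\to\infty$ and the convergence of the $P_\rmu$-tail under the a priori growth bound on $u$, and $F\in L^1$ to actually produce decay.
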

\begin{proof}
First, we recall the following standard notations: throughout this lemma we denote by $\bH_{\rms/\rmu}$ the stable/unstable subspaces of $G$ and by $P_{\rms/\rmu}$ the projections into $\bH_{\rms/\rmu}$ parallel to $\bH_{\rmu/\rms}$. Next, we note that
\begin{equation}\label{3.7-1}
T_\rmu(\cdot-z)P_\rmu F(\cdot)u(\cdot)\in L^1([z,\infty),\bH)\quad\mbox{for any}\quad z\geq a.
\end{equation}
Indeed, one can readily check that
\begin{equation}\label{3.7-2}
\|T_\rmu(\zeta-z)P_\rmu F(\zeta)u(\zeta)\|\leq M e^{-\nu(\zeta-z)}e^{\theta\zeta}=(M e^{\nu z})e^{-(\nu-\theta)\zeta}
\end{equation}
for any $\zeta\geq z\geq a$. Since $\theta<\nu$ assertion \eqref{3.7-1} follows shortly. Since $u$ is a mild solution of equation $u'=(G+F(z))u$ on $[a,b]$ for any $b>a$, we have that
\begin{align}\label{3.7-3}
u(z)&=T_\rms(z-a)P_\rms u(a)+T_\rmu(b-z)P_\rmu u(b)+\int_a^z T_\rms(z-\zeta)P_\rms F(\zeta)u(\zeta)\rmd\zeta\nonumber\\
&\qquad-\int_z^b T_\rmu(\zeta-z)P_\rmu F(\zeta)u(\zeta)\rmd\zeta\quad\mbox{for any}\quad z\in [a,b].
\end{align}
From \eqref{3.7-2} it follows that $\lim_{b\to\infty}T_\rmu(b-z)P_\rmu u(b)=0$ for any $z\geq a$. Passing to the limit in \eqref{3.7-3}, from \eqref{3.7-1} we obtain that
\begin{align}\label{3.7-4}
u(z)&=T_\rms(z-a)P_\rms u(a)+\int_a^z T_\rms(z-\zeta)P_\rms F(\zeta)u(\zeta)\rmd\zeta\nonumber\\
&\qquad-\int_z^\infty T_\rmu(\zeta-z)P_\rmu F(\zeta)u(\zeta)\rmd\zeta\,\mbox{ for any }\, z\geq a.
\end{align}
We conclude that
\begin{equation}\label{3.7-5}
\|u(z)\|\leq Me^{-\nu(z-a)}+M\int_a^\infty e^{-\nu|z-\zeta|}\|F(\zeta)\|_{\cB(\bH)}\,\|u(\zeta)\|\rmd\zeta\,\mbox{ for any }\, z\geq a.
\end{equation}
Since $\|F(\cdot)\|_{\cB(\bH)}\in L^1(\RR_+)$ from \cite[Chapter III, Lemma 2.2]{DK} we infer that for any $\delta\in(0,\nu)$ there exists $N>0$ such that $\|u(z)\|\leq Ne^{-\delta z}$ for any $z\geq a$. \end{proof}
Next, we prove that the function $Y_0$ decays exponentially, which allows us to conclude that $v_0$ is a genuine eigenfunction.
\begin{lemma}\label{l3.8}
Assume Hypotheses (H1)-(H3), $\Re\;\lambda_0>\sup\Re\sigma_{\mathrm{ess}}(\cL)$. We recall the definition of the function $v_0$ introduced in \eqref{eigenfuntion-v0}. Then, the following assertions hold true:
\begin{enumerate}
\item[(i)] There exist $M,\delta>0$ such that
\begin{equation}\label{v0-est}
\|v_0(\cdot,z)\|_{H^1_0(\Omega)}^2+\|\pa_zv_0(\cdot,z)\|_{L^2(\Omega)}^2\leq M^2e^{-2\delta|z|}\quad\mbox{for any}\quad z\in\RR;
\end{equation}
\item[(ii)] The function $v_0$ belongs to $H^2(\Omega\times\RR)\cap H_0^1(\Omega\times\RR)$ and therefore is an eigenfunction of the linear operator $\Delta_{x,z}+V-c^2/4$.
\end{enumerate}
\end{lemma}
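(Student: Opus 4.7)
My plan is to apply Lemma~\ref{l3.7} to the function $Y_0$ of Remark~\ref{r3.6} separately on $[0,\infty)$ and $(-\infty,0]$. The essential ingredient is an a priori bound of the form $\|Y_0(z)\|_{H^1_0(\Omega)\times L^2(\Omega)}\le Me^{\theta z}$ with $\theta<\nu_+$ on the right half-line, and an analogous bound on the left. Since $u_0\in H^2(\Omega\times\RR)\cap H_0^1(\Omega\times\RR)$, one-dimensional Sobolev embedding applied in the $z$-variable (exploiting that $u_0,\pa_x u_0,\pa_z u_0,\pa_z\pa_x u_0\in L^2(\Omega\times\RR)$) shows that $z\mapsto\|u_0(\cdot,z)\|_{H^1_0(\Omega)}$ and $z\mapsto\|\pa_z u_0(\cdot,z)\|_{L^2(\Omega)}$ are bounded on $\RR$ (indeed, they vanish at $\pm\infty$). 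Combined with $v_0=e^{cz/2}u_0$ and $\pa_z v_0=e^{cz/2}\bigl(\tfrac{c}{2}u_0+\pa_z u_0\bigr)$, this produces a constant $C>0$ with
\begin{equation*}
\|Y_0(z)\|_{H^1_0(\Omega)\times L^2(\Omega)}\le Ce^{cz/2}\quad\text{for every}\quad z\in\RR,
\end{equation*}
so the exponential growth rate of $Y_0$ is controlled by $|c|/2$ on each half-line.

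\textbf{Decay of $Y_0$ at $\pm\infty$.} On $[0,\infty)$, Remark~\ref{r3.6} identifies $Y_0$ as a mild solution of $Y'=(A_++B_+(z))Y$ on every compact subinterval; Lemma~\ref{l3.2} yields that $A_+$ generates an exponentially stable bi-semigroup with rate $-\nu_+<-|c|/2$; and Lemma~\ref{r3.3} provides $\|B_+(\cdot)\|_{\cB(H^1_0(\Omega)\times L^2(\Omega))}\in L^1(\RR_+)\cap L^\infty(\RR_+)$. Since the a priori estimate above holds with exponent $\theta=\max\{c/2,0\}<\nu_+$, Lemma~\ref{l3.7} gives $\|Y_0(z)\|\le N_+e^{-\delta_+z}$ for $z\ge0$ and any fixed $\delta_+\in(0,\nu_+)$. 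For $(-\infty,0]$ I use the change of variables $\tilde z=-z$, $\tilde Y(\tilde z)=Y_0(-\tilde z)$, converting the problem into a mild equation $\tilde Y'=(-A_-+\tilde B_-(\tilde z))\tilde Y$ with $\tilde B_-(\tilde z)=-B_-(-\tilde z)$. The operator $-A_-$ also generates an exponentially stable bi-semigroup with decay rate $-\nu_-$ (the stable and unstable subspaces of $A_-$ simply exchange roles), and $\|\tilde B_-(\cdot)\|\in L^1(\RR_+)\cap L^\infty(\RR_+)$. The translated a priori bound $\|\tilde Y(\tilde z)\|\le Ce^{-c\tilde z/2}$ has growth exponent $\max\{-c/2,0\}<\nu_-$, so Lemma~\ref{l3.7} again yields exponential decay of $\tilde Y$ as $\tilde z\to\infty$. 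Combining the two half-line estimates proves (i).

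\textbf{From pointwise decay to a genuine eigenfunction.} Squaring the estimate in (i) and integrating in $z$ immediately gives $v_0\in L^2(\RR;H^1_0(\Omega))$ and $\pa_z v_0\in L^2(\RR;L^2(\Omega))$; in particular $v_0,\pa_x v_0,\pa_z v_0\in L^2(\Omega\times\RR)$, and $v_0$ inherits vanishing trace on $\pa\Omega\times\RR$ from $u_0$. Consider the self-adjoint Schr\"odinger operator
\begin{equation*}
\cT:=\pa_x^2+\pa_z^2+V-\tfrac{c^2}{4},\quad\dom(\cT)=H^2(\Omega\times\RR)\cap H_0^1(\Omega\times\RR).
\end{equation*}
Testing \eqref{eigenfuntion-v0} against $\phi\in C_c^\infty(\Omega\times\RR)$ and integrating by parts yields $\langle v_0,\cT\phi\rangle_{L^2}=\lambda_0\langle v_0,\phi\rangle_{L^2}$. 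Since $C_c^\infty(\Omega\times\RR)$ is a core for $\cT$ and $\cT=\cT^*$, this identity forces $v_0\in\dom(\cT^*)=\dom(\cT)=H^2(\Omega\times\RR)\cap H_0^1(\Omega\times\RR)$ with $\cT v_0=\lambda_0v_0$, proving (ii). The principal difficulty this route circumvents is that the first-order reformulation $Y'=A(z)Y$ is ill-posed---the spectrum $\sigma(A_\pm)$ given by \eqref{square-root-spectrum} is unbounded in both half-planes---so classical exponential dichotomy tools do not apply directly; the strict gap $\nu_\pm>|c|/2$, which is exactly what the hypothesis $\Re\lambda_0>\sup\Re\sigma_{\mathrm{ess}}(\cL)$ delivers through Lemma~\ref{l3.2}, is precisely what allows the exponential factor $e^{cz/2}$ introduced by the self-adjointing substitution to be absorbed into the bi-semigroup decay.
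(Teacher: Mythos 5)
Part (i) of your proof follows the paper's argument almost verbatim: invoke Remark~\ref{r3.6} to see $Y_0$ is a mild solution on compact intervals, Lemma~\ref{l3.2} for the bi-semigroup with rate $-\nu_\pm<-|c|/2$, Lemma~\ref{r3.3} for $\|B_\pm(\cdot)\|\in L^1\cap L^\infty$, then Lemma~\ref{l3.7} on each half-line after the reflection $z\mapsto-z$. You are more careful than the paper in making the a priori growth hypothesis $\|Y_0(z)\|\le Me^{\theta z}$, $\theta<\nu_\pm$, explicit via the one-dimensional Sobolev embedding in $z$ and the identity $\partial_z v_0=e^{cz/2}(\tfrac{c}{2}u_0+\partial_z u_0)$; the paper leaves this verification implicit. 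That part is fine.

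Part (ii) contains a genuine gap: the claim that $C_c^\infty(\Omega\times\RR)$ is a core for $\cT=\Delta_{x,z}+V-c^2/4$ with $\dom(\cT)=H^2(\Omega\times\RR)\cap H^1_0(\Omega\times\RR)$ is false whenever $\partial\Omega\neq\emptyset$ (which includes the motivating Example~\ref{example-Hamel} with $\Omega=(0,L)$). On $C_c^\infty(\Omega\times\RR)$ the graph norm of $\Delta_{x,z}$ is equivalent to the $H^2$ norm, so the graph closure of $\cT|_{C_c^\infty}$ has domain $H^2_0(\Omega\times\RR)$, a proper symmetric restriction of $\cT$ (functions in $H^2\cap H^1_0$ may have nonvanishing normal derivative). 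Consequently $(\cT|_{C_c^\infty})^*$ is the maximal operator $\{u\in L^2:\Delta u\in L^2\}$, which strictly contains $\dom(\cT)$, and your distributional identity only places $v_0$ in this larger set. You do correctly establish $v_0\in H^1_0(\Omega\times\RR)$ and $\Delta_{x,z}v_0\in L^2(\Omega\times\RR)$, but passing from there to $v_0\in H^2$ requires an elliptic regularity (weak-implies-strong) step that your argument does not supply. The paper circumvents this by a $z$-cutoff: set $v_n=\kappa_n(z)v_0$; since $v_n=e^{cz/2}\kappa_n(z)u_0(x,z)$ with $e^{cz/2}\kappa_n\in C_0^\infty(\RR)$ and $u_0\in H^2\cap H^1_0$, each $v_n$ lies in $H^2\cap H^1_0$; the equation \eqref{eigenfuntion-v0} and the decay from (i) give a uniform $L^2$ bound on $(\Delta_{x,z}-I)v_n$; then the invertibility of $\Delta_{x,z}-I:H^2\cap H^1_0\to L^2$ yields a uniform $H^2$ bound, and $v_0\in H^2\cap H^1_0$ follows. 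To repair your argument along your own lines you would need to replace the false core claim by the bijectivity of $\Delta_{x,z}-I$ on the Dirichlet domain together with a uniqueness argument for weak solutions in $H^1_0$; as written, the step does not hold.
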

\begin{proof} (i) From Lemma~\ref{l3.2} we have that the linear operator $A_\pm$ generates an exponentially stable bi-semigroups on $H_0^1(\Omega)\times L^2(\Omega)$ having decay rate $-\nu_\pm$ such that $\nu_\pm>|c|/2$. Therefore, we can choose $\delta>0$ such that $\nu_\pm>\delta>|c|/2$. Since $Y_0(z)=(v_0(\cdot,z),\pa_zv_0(\cdot,z))^{\mathrm{T}}$ is a mild solution of equation $Y_0'=\big(A_++B_+(z)\big)Y_0$ by Remark~\ref{r3.6},
from Lemma~\ref{r3.3}  and Lemma~\ref{l3.7} we infer that there exists $M>0$
\begin{equation}\label{3.8-1}
\|Y_0(z)\|_{H_0^1(\Omega)\times L^2(\Omega)}\leq M e^{-\delta z}\quad\mbox{for any}\quad z\geq 0.
\end{equation}
From the definition of bi-semigroups, one can readily check that $-A_-$ generates an exponentially stable bi-semigroup on $H_0^1(\Omega)\times L^2(\Omega)$. Making the change of variables $z\to-z$ in equation $Y_0'=\big(A_-+B_-(z)\big)Y_0$, it follows that $Y_0(-\cdot)$ is a mild solution of equation $Y'=\big(-A_--B_-(-z)\big)Y$. Using again Lemma~\ref{r3.3}, Remark~\ref{r3.6} and Lemma~\ref{l3.7} we conclude that
\begin{equation}\label{3.8-1-bis}
\|Y_0(-z)\|_{H_0^1(\Omega)\times L^2(\Omega)}\leq M e^{-\delta z}\,\mbox{ for any }\, z\geq 0.
\end{equation}
The estimate \eqref{v0-est} follows shortly from \eqref{3.8-1} and \eqref{3.8-1-bis}.

(ii) Since $u_0\in H^2(\Omega\times\RR)\cap H^1_0(\Omega\times\RR)$ and $v_0(x,z)=e^{cz/2}u_0(x,z)$ for any $x\in\Omega$ and $z\in\RR$, from \eqref{v0-est} we immediately conclude that $v_0\in H^1_0(\Omega\times\RR)$. To prove (ii) we use that $v_0$ satisfies equation \eqref{eigenfuntion-v0}. We introduce $\kappa_n\in C_0^\infty(\RR)$ such that $0\leq\kappa_n\leq 1$, $\kappa_n(z)=1$ for any $z\in [-n,n]$, $\supp\kappa_n$ is compact and the derivatives of $\kappa_n$ satisfy
\begin{equation}\label{kappa-deriv}
\sup_{n\geq1}\|\kappa_n^{(j)}\|_\infty<\infty\quad\mbox{for}\quad j=1,2.
\end{equation}
Let $v_n:\Omega\times\RR\to\CC$ be the function defined by $v_n(x,z)=\kappa_n(z)v_0(x,z)$. Since $v_n(x,z)=e^{cz/2}\kappa_n(z)u_0(x,z)$ for any $n\geq 1$, $x\in\Omega$, $z\in\RR$, $\kappa_n\in C_0^\infty(\RR)$ for any $n\geq 1$ and $u_0\in H^2(\Omega\times\RR)\cap H_0^1(\Omega\times\RR)$ we obtain that $v_n\in H^2(\Omega\times\RR)\cap H_0^1(\Omega\times\RR)$ and
\begin{equation}\label{3.8-2}
(\Delta v_n)(x,z)=2\kappa_n'(z)\pa_zv_0(x,z)+\Big(\kappa_n''(z)-\big(\frac{c^2}{4}+\lambda_0-V(x,z)\big)\kappa_n(z)\Big)v_0(x,z)
\end{equation}
for any $n\geq 1$, $x,z\in\RR$. Since $v_0\in H_0^1(\Omega\times\RR)$, from (i), \eqref{kappa-deriv} and \eqref{3.8-2} it follows that $\sup_{n\geq 1}\|(\Delta_{x,z}-I)v_n\|_{L^2(\Omega\times\RR)}<\infty$. Using that $\Delta_{x,z}-I$ is invertible with bounded inverse from $H^2(\Omega\times\RR)\cap H_0^1(\Omega\times\RR)$ to $L^2(\Omega\times\RR)$ we obtain that $\sup_{n\geq 1}\|v_n\|_{H^2(\Omega\times\RR)\cap H_0^1(\Omega\times\RR)}<\infty$.
Since $\kappa_n(z)=1$ for any $n\geq 1$ and $z\in[-n,n]$, we conclude that $v_0\in H^2(\Omega\times\RR)\cap H_0^1(\Omega\times\RR)$, proving (ii) and the lemma.
\end{proof}

\noindent\textbf{Proof of Theorem~\ref{t1.2}.} Fix $\lambda_0\in\sigma_{\rmd}(\cL)$ such that $\mathrm{Re}\,\lambda_0>\sup\mathrm{Re}\,\sigma_{\mathrm{ess}}(\cL)$. From \eqref{eigenfuntion-v0} and Lemma~\ref{l3.8}(ii) we have that $\lambda_0\in\sigma_{\rmd}(\Delta_{x,z}+V-c^2/4)$. Since the linear operator $\Delta_{x,z}+V-c^2/4$ is self-adjoint we conclude that $\lambda_0\in\RR$, proving the theorem.

\section{Discussion: the finite-dimensional case}\label{s4}
In this section we describe how to use the methods from this paper to recover the results from \cite{BoJo,HLS2} similar to Theorem~\ref{t1.2}.
Let $\ou(y-ct)$ be a traveling wave of the Allen-Cahn equation
\begin{equation}\label{5.9-1}
u_t+F'(u)=u_{yy},\quad t\geq 0,\;y\in\RR,
\end{equation}
and assume its profile is such that $u_\pm=\lim_{z\to\pm\infty}\ou(z)$ exists. The linearization along the wave is given by $\cL_*=\pa_z^2+c\pa_z-F''(\ou(z))$ in the moving frame $z=y-ct$.
\begin{theorem}(\cite{BoJo,HLS2})\label{t4.1}
Assume that the profile $\ou$ satisfies the condition
\begin{equation}\label{assumption-HLS2}
\int_{-\infty}^s (1+|x|)|\ou(x)-u_-|\rmd x<\infty\;\mbox{and}\;\int_{s}^\infty (1+|x|)|\ou(x)-u_+|\rmd x<\infty
\end{equation}
for any $s\in\RR$. Then, the discrete spectrum of $\cL_*$ to the right of the right most point of its essential spectrum is real.
\end{theorem}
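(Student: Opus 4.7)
The plan is to follow the strategy of Theorem~\ref{t1.2}, taking advantage of the fact that here the spatial dynamics system is finite-dimensional and hence well-posed, so no bi-semigroup machinery is needed. Fix $\lambda_0\in\sigma_{\rmd}(\cL_*)$ with $\mathrm{Re}\,\lambda_0>\sup\mathrm{Re}\,\sigma_{\mathrm{ess}}(\cL_*)$, let $u_0\in H^2(\RR)$ be an associated eigenfunction, and set $v_0(z)=e^{cz/2}u_0(z)$. A direct computation shows that $v_0$ satisfies
\begin{equation*}
v_0''-\Big(F''(\ou(z))+\frac{c^2}{4}\Big)v_0=\lambda_0 v_0,
\end{equation*}
and the goal is to prove that $v_0\in H^2(\RR)$, because then $\lambda_0$ is an eigenvalue of the self-adjoint operator $\pa_z^2-F''(\ou)-c^2/4$ and is therefore real.

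Introducing $Y_0=(v_0,v_0')^{\mathrm T}$, the equation rewrites as $Y_0'=(A_\pm+B_\pm(z))Y_0$, where
\begin{equation*}
A_\pm=\begin{bmatrix}0 & 1\\ \lambda_0+c^2/4+F''(u_\pm) & 0\end{bmatrix},\qquad B_\pm(z)=\begin{bmatrix}0 & 0\\ F''(\ou(z))-F''(u_\pm) & 0\end{bmatrix}.
\end{equation*}
Smoothness of $F$ together with \eqref{assumption-HLS2} give $\ou(\cdot)-u_\pm\in L^1(\RR_\pm)\cap L^\infty(\RR_\pm)$, hence $\|B_\pm(\cdot)\|\in L^1(\RR_\pm)\cap L^\infty(\RR_\pm)$. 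The eigenvalues of $A_\pm$ are $\pm\sqrt{\lambda_0+c^2/4+F''(u_\pm)}$, and the one-dimensional analogue of Theorem~\ref{t1.1} identifies $\sup\mathrm{Re}\,\sigma_{\mathrm{ess}}(\cL_*)=\max\{-F''(u_+),-F''(u_-)\}$. The hypothesis on $\lambda_0$ therefore yields $\mathrm{Re}(\lambda_0+c^2/4+F''(u_\pm))>c^2/4$, and the same computation with the principal branch of the square root as in \eqref{3.1-5} shows that the absolute values of the real parts of the eigenvalues of $A_\pm$ strictly exceed $|c|/2$. Consequently each $A_\pm$ is hyperbolic with one-dimensional stable and unstable subspaces and spectral gap $\nu_\pm>|c|/2$.

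Because $A(z)$ is a bounded $2\times 2$ matrix, the system $Y_0'=A(z)Y_0$ is genuinely well-posed, and the classical roughness theorem for exponential dichotomies under $L^1$ perturbations applies (the finite-dimensional, well-posed counterpart of Lemma~\ref{l3.7}; see \cite[Chapter~III, \S2]{DK}). Since $u_0\in H^2(\RR)\subset L^\infty(\RR)$, we have $|Y_0(z)|\leq Ce^{|c|\,|z|/2}$ on $\RR$, a rate of growth strictly smaller than $\nu_\pm$, so the hypotheses of Lemma~\ref{l3.7} are met on both half-lines. We therefore obtain, for any fixed $\delta\in(|c|/2,\min\{\nu_+,\nu_-\})$, a constant $M>0$ with $|Y_0(z)|\leq Me^{-\delta|z|}$ for every $z\in\RR$. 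This forces $v_0\in H^1(\RR)$, and feeding the bound back into the eigenvalue equation together with the boundedness of $F''(\ou)$ upgrades the regularity to $v_0\in H^2(\RR)$. Hence $\lambda_0\in\sigma_{\rmd}(\pa_z^2-F''(\ou)-c^2/4)\subset\RR$. As in the multi-dimensional setting, the main technical point is securing the margin $\delta>|c|/2$, which is precisely what the assumption $\mathrm{Re}\,\lambda_0>\sup\mathrm{Re}\,\sigma_{\mathrm{ess}}(\cL_*)$ provides; the first-moment integrability in \eqref{assumption-HLS2} is in fact stronger than what this argument requires, since plain $L^1$ integrability of $\ou-u_\pm$ already suffices.
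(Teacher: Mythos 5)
Your proposal is correct and follows the same strategy the paper sketches in Section~\ref{s4}: pass to $v_0=e^{cz/2}u_0$, recast the transformed eigenvalue equation as the first-order system $Y_0'=(A_\pm+B_\pm(z))Y_0$, verify that $A_\pm$ is hyperbolic with spectral gap $\nu_\pm>|c|/2$ via the condition $\Re\,\lambda_0>\sup\Re\,\sigma_{\mathrm{ess}}(\cL_*)$ and the square-root estimate of \eqref{3.1-5}, and then invoke Lemma~\ref{l3.7} to get exponential decay of $Y_0$, which makes $v_0\in H^2(\RR)$ a genuine eigenfunction of the self-adjoint operator $\partial_z^2-c^2/4-F''(\ou)$. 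You also correctly observe, as the paper does, that plain $L^1(\RR_\pm)\cap L^\infty(\RR_\pm)$ integrability of $\ou-u_\pm$ already suffices and that the first-moment condition \eqref{assumption-HLS2} is stronger than needed.
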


To prove this result, the authors make the change of variables $v_*(z)=e^{-c/2z}u_*(z)$ in the eigenvalue equation $\cL_*u_*=\lambda_0 u_*$, where $\lambda_0\in\sigma_\rmd(\cL_*)$ with $\Re\lambda_0>\sup\Re\;\sigma_{\mathrm{ess}}(\cL_*)$. Then, they control the decay rate of the eigenfunction $u_*$ by computing the decay rate of the Jost solutions associated to the eigenvalue problem. This argument allows then to conclude that $v_*$ is a genuine eigenfunction of the self-adjoint operator $\pa_z^2-c^2/4-F''(\ou(\cdot))$ associated to the eigenvalue $\lambda_0$, which proves the statement.

The key part of this argument is to control the decay rate of the eigenfunction $u_*$. In the infinite-dimensional case its not straitforward to construct the Jost solutions of the not well-posed, first order equation \eqref{eigenfuntion-firstorder}, nor to conclude from here that the decay rate of the eigenfunction $u_0$ introduced in \eqref{eigenfuntion-u0} can be evaluated using the decay rates of the (infinitely many) Jost solutions. In fact, in \cite{LP3} it is shown that the construction of Jost solutions associated to \eqref{eigenfuntion-firstorder} requires a significant effort in this infinite-dimensional, not well-posed case.
The method we used to prove Theorem~\ref{t1.2} can be applied to control the growth rate of the eigenfunction $u_*$ by recasting the second order equation
\begin{equation}\label{5.9-2}
v_*''-\big(\frac{c^2}{4}+F''(\ou(z))\big)v_*=\lambda_0v_*
\end{equation}
as a first order, linear differential equation on some finite-dimensional space, whose decay rates can be estimated using Lemma~\ref{l3.7}. Moreover, we can prove the result by assuming that $\ou-u_\pm\in L^1(\RR_\pm)\cap L^\infty(\RR_\pm)$, thus relaxing assumption \eqref{assumption-HLS2}.
Indeed, as shown in \cite{HLS2}, if $w_*=v_*'$ then the pair $(v_*,w_*)$ satisfies the equation
\begin{equation}\label{5.9-3}
\frac{d}{dz}\begin{pmatrix}v_*\\w_*\end{pmatrix}=A_*(z)\begin{pmatrix}v_*\\w_*\end{pmatrix},
\end{equation}
where $A_*(z)$ is the matrix-valued function defined by
\begin{equation}\label{5.9-4}
A_*(z)=\begin{bmatrix}
0 & I \\
\lambda_0+\frac{c^2}{4}+F''(\ou(z))& 0
\end{bmatrix}.
\end{equation}
Similar to the infinite-dimensional case \eqref{eigenfuntion-firstorder}, we have the decomposition $A_*(z)=A_{*,\pm}+B_{*,\pm}(z)$, where
\begin{equation}\label{5.9-5}
A_{*,\pm}=\begin{bmatrix}
0 & I \\
\lambda_0+\frac{c^2}{4}+F''(u_\pm)& 0
\end{bmatrix}, \; B_{*,\pm}(z)=\begin{bmatrix}
0 & 0 \\
F''(u_\pm)-F''(\ou(z))& 0
\end{bmatrix}.
\end{equation}
Similar to the proof of Lemma~\ref{l3.2}, using Lemma~\ref{l3.1} and the condition $\Re\lambda_0>\sup\Re\;\sigma_{\mathrm{ess}}(\cL_*)$, we can show that $A_{*,\pm}$ is a hyperbolic matrix, hence it generates a bi-semigroup. Since $F$ is a smooth function (typically one assumes that $F$ is of class $C^2$) and $\ou-u_\pm\in L^1(\RR_\pm)\cap L^\infty(\RR_\pm)$ it follows that $\|B_{*,\pm}(\cdot)\|\in L^1(\RR_\pm)\cap L^\infty(\RR_\pm)$. Next, we can apply Lemma~\ref{l3.7} to prove that
$\|(v_*(z),w_*(z))^{\mathrm{T}}\|\leq M e^{-\delta |z|}$ for any $z\in\RR$, for some $M,\delta>0$. From here we can immediately conclude that $v_*$ is a genuine eigenfunction of the self-adjoint operator $\pa_z^2-c^2/4-F''(\ou(\cdot))$ associated to the eigenvalue $\lambda_0$, proving that $\lambda_0$ is real.

\end{document}